\numberwithin{equation}{section}
\tikzstyle{every loop}= []
\colorlet{myGray}{gray!25}
\tikzset{my circle/.style={circle,draw=black,fill=myGray,inner
    sep=0pt,minimum size=6pt}} \tikzset{my square/.style={regular
    polygon,regular polygon sides=4,draw=black,fill=myGray,inner
    sep=0pt,minimum size=9pt}} \tikzset{my star/.style={star,star
    point ratio=2.5,draw=black,fill=myGray,inner sep=0pt,minimum
    size=9pt}} \tikzset{my triangle/.style={regular polygon,regular
    polygon sides=3,draw=black,fill=myGray,inner sep=0pt,minimum
    size=9pt}} \tikzset{my
  kite/.style={diamond,aspect=0.25,draw=black,fill=myGray,inner
    sep=1pt,minimum size=6pt}}
\tikzset{every pin/.style={pin distance=3pt,inner sep=1pt,font=\tiny}}
\tikzset{every pin edge/.style={semithick}}
\newtheorem{theorem}{Theorem}[section]
\newtheorem{corollary}[theorem]{Corollary}
\newtheorem{proposition}[theorem]{Proposition}
\newtheorem{problem}[theorem]{Question}
\newtheorem{construction}[theorem]{Construction}
\theoremstyle{definition}
\newtheorem{definition}[theorem]{Definition}
\theoremstyle{remark}
\newtheorem{remark}[theorem]{Remark}
\theoremstyle{definition}
\newtheorem{example}[theorem]{Example}
\theoremstyle{definition}
\newcommand{\C}{\mathcal{C}}
\newcommand{\G}{\mathcal{G}}
\newcommand{\LL}{\mathcal{L}}
\newcommand{\pr}{\mathbb{P}}
\begin{document}


\title{Hilbert functions of schemes of double and reduced points}

\author[E. Carlini]{Enrico Carlini}
\address[E. Carlini]{DISMA-Department of Mathematical Sciences \\
Politecnico di Torino, Turin, Italy}
\email{enrico.carlini@polito.it}

\author[M. V. Catalisano]{Maria Virginia Catalisano}
\address[M. V. Catalisano]{Dipartimento di Ingegneria Meccanica,
Energetica, Gestionale e dei
Trasporti, Universit\`a degli studi di Genova, Genoa, Italy}
\email{catalisano@dime.unige.it}

\author[E. Guardo]{Elena Guardo}
\address[E. Guardo]{Dipartimento di Matematica e Informatica\\
Universit\`a degli studi di Catania\\
Viale A. Doria, 6 \\
95100 - Catania, Italy}
\email{guardo@dmi.unict.it}

\author[A. Van Tuyl]{Adam Van Tuyl}
\address[A. Van Tuyl]{Department of Mathematics and Statistics\\
McMaster University, Hamilton, ON, Canada L8S 4L8}
\email{vantuyl@math.mcmaster.ca}


\keywords{fat points, star configuration points, Hilbert
functions}
\subjclass[2000]{14M05, 13D40; 13H15; 14N20}
\thanks{Version: \today}


\begin{abstract}
It remains an open problem to classify the Hilbert functions of double
points in $\mathbb{P}^2$.
Given a valid Hilbert function $H$ of a zero-dimensional
scheme in $\mathbb{P}^2$, we show how to construct a set of fat points
 $Z \subseteq
\mathbb{P}^2$ of double and reduced points such that $H_Z$, the
Hilbert function of $Z$, is the same as $H$.   In other words, we
show that any valid Hilbert function $H$ of a zero-dimensional
scheme is the Hilbert function of a set a positive
number of double points and some reduced
points. For some families of valid Hilbert functions, we are also
able to show that $H$ is the Hilbert function of only double points.
In addition, we give necessary and sufficient conditions for the Hilbert
function of a scheme of a double points, or double points plus one
additional reduced point, to be the Hilbert function
of points with support on a star
configuration of lines.
\end{abstract}


\maketitle



\section{Introduction}

Throughout this paper, $k$ will denote an algebraically closed field
of characteristic zero. Let $X = \{P_1,\ldots,P_s\} \subseteq \pr^2$
be a finite set of reduced points with associated homogeneous ideal
$I_X = I_{P_1}\cap \cdots \cap I_{P_s} \subseteq R = k[x_0,x_1,x_2]$.
Given positive integers $m_1,\ldots,m_s$, we let $Z = m_1P_1 + \cdots + m_sP_s$
denote the scheme defined by
the homogeneous ideal $I_Z = I_{P_1}^{m_1} \cap \cdots \cap I_{P_s}^{m_s}$.
We refer to $Z$ as a {\it set of fat points}.    We call $m_i$ the
{\it multiplicity} of the point $P_i$;  when $m_i = 2$, we sometimes
call $P_i$ a {\it double point}.   Given a set of fat points
$Z$, the {\it support} of $Z$ is the set
${\rm Supp}(Z) = \{P_1,\ldots,P_s\}$.

Information about the  set of fat points $Z$ is encoded into
its Hilbert function.   Recall that the {\it Hilbert function} of $Z$
is the function $H_Z: \mathbb{N} \rightarrow \mathbb{N}$ defined by
\[i \mapsto \dim_k (R/I_Z)_i = \dim_k R_i - \dim_k (I_Z)_i\]
where $R_i$, respectively $(I_Z)_i$, denotes the $i$-th graded
piece of $R$, respectively $(I_Z)_i$
(see
Chapter 5 of \cite{KR} for a comprehensive introduction to Hilbert functions).
It is then natural to ask
if one can characterize what functions are the Hilbert function of
a set of fat points.  A complete characterization of the Hilbert
functions of reduced points (i.e., all the $m_i = 1$) was first
described by Geramita, Maroscia, and Roberts \cite{GMR}. However,
even in the case that all the fat points are double points, a
characterization of the Hilbert functions remains elusive (see,
for example, the surveys of Gimigliano  \cite{Gi} and
Harbourne \cite{H}).  In this paper, we contribute to
this open problem by showing that every Hilbert function of a
collection of reduced points in $\mathbb{P}^2$
is also the Hilbert function of a collection of
double points and reduced points in $\mathbb{P}^2$.   In specific
cases, we can give a sufficient condition for a numerical function
to be the Hilbert function of a scheme consisting only of double
points.

To further describe our results, we introduce some additional notation.
One way to study the
Hilbert function of $H_Z$ is to study its {\it first difference function}
(sometimes called the Castelnuovo function) which is given
by
\[\Delta H_Z(i) = H_Z(i) - H_Z(i-1) ~~\mbox{for all $i \geq 0$,
where $H_Z(-1) = 0$.}\]

When $Z$ is a zero-dimensional scheme in $\pr^2$, it can be shown
(see Remark \ref{validhf}) that all but a finite number of values
of $\Delta H_Z(i)$ are zero. Furthermore, if $\Delta H_Z(i) = 0$
for all $i \geq \sigma+1$, and if we write $\Delta H_Z =
(h_0,\ldots,h_\sigma)$ to encode all the non-zero values of
$\Delta H_Z$,  then there is an $0 < \alpha \leq \sigma$ such that
  \begin{enumerate}
  \item[$(a)$] $h_i = i+1$ if $0 \leq i < \alpha$, and
  \item[$(b)$] $h_i \geq h_{i+1}$ if $\alpha \leq i \leq \sigma$.
  \end{enumerate}
We call $\Delta H = (h_0,\ldots,h_\sigma)$ a
{\it valid Hilbert function}
of a zero-dimensional scheme
in $\mathbb{P}^2$ if $\Delta H$ satisfies conditions
$(a)$ and $(b)$.  Ideally, we want to answer the
following question:

\begin{problem}\label{mainq}
Let $\Delta H = (h_0,\ldots,h_\sigma)$ be a valid Hilbert
function. Write $\sum \Delta H = \sum_{i=0}^\sigma h_i$ as $\sum
\Delta H = 3d + r$ with $r\in\{0,1,2\}$.  Does there exist a set
$Z$ of $d$ double points and $r$ reduced points in $\mathbb{P}^2$
such that $\Delta H_Z = \Delta H$?
\end{problem}

Note that a scheme $Z$ with $d$ double points and $r$ reduced points
in $\mathbb{P}^2$
will have $\deg(Z) = 3d+r$.  Furthermore, it is known that
$H_Z(i) = \deg(Z)$ for $i \gg 0$.   This explains why we require
 $\sum \Delta H = 3d + r$.  If we could answer this question,
we could determine if a valid Hilbert
function is the Hilbert function of a set of double points.  Thus, the above
question is quite difficult.

We can ask a weaker question by simply asking if any set of double
points and reduced points can be constructed:

\begin{problem}\label{weakerq}
Let $\Delta H = (h_0,\ldots,h_\sigma)$ be a valid Hilbert function.
Can one always find
integers $d$ and $r$ where $d$ is positive and $r\geq 0$ with
$\sum \Delta H = 3d+r$
such that $H$ is the Hilbert function of a set $Z$ of $d$ double points and $r$ simple
points in $\mathbb{P}^2$?
\end{problem}

Note that if we allow $d=0$ and $r = \sum \Delta H$, then the above
question is simply asking if $\Delta H$ is
the Hilbert function of $r$ reduced points, which follows from
Geramita, Maroscia, and Roberts \cite{GMR}.  We can now view
Question \ref{mainq} as asking if the $d$ in Question \ref{weakerq} can
be taken to be the maximum allowed value.  Ideally, when trying
to answer Question \ref{weakerq}, we want to
make $d$ as large as possible.

One of the main results of this paper (Theorem \ref{procedure})
will give us a tool to answer to Question \ref{weakerq}.
Specifically,  starting with a set of double and reduced
points on a collection of
general lines in $\mathbb{P}^2$,
we describe how to ``merge'' three reduced points
to make a new scheme with one new double point and three fewer
reduced points.   Moreover, this procedure does not change
the Hilbert function. The results
of Cooper, Harbourne, and Teitler \cite{CHT} are the crucial
ingredient to prove that our new configuration has the correct
Hilbert function.    By reiterating this process, in a controlled
fashion, Construction \ref{algorithm} shows how to
start from a valid Hilbert function $\Delta H$
and create a set $Z$  of double
and simple points with $\Delta H = \Delta H_{Z}$.
Our answer to Question \ref{weakerq} is given
in Theorem \ref{lowerbounds} where we find a $d$,
that depends only on $\Delta H$, such that we can construct a
set of $d$ double points and $(\sum \Delta H) -3d$ reduced
 points whose Hilbert function is $H$.  In fact,
 for all $1 \leq d' \leq d$, we can find a
 scheme of $d'$ double points and $(\sum \Delta H) -3d'$
 reduced points with Hilbert function $H$ (see Corollary
 \ref{lowerboundcor}).  Moreover,
in Theorem \ref{HFofdoublepts} we give a condition
on a valid Hilbert function $H$ that guarantees that $H$ is
the Hilbert function of only double points.



We then  focus on the special cases that
\[\Delta H = (\underbrace{1,2,3,\ldots,t}_{t},\underbrace{t+1,\ldots,t+1}_{t})
~~\mbox{or}~~
\Delta H = (\underbrace{1,2,3,\ldots,t}_{t},\underbrace{t+1,\ldots,t+1}_{t},1)\]
In these cases, our construction produces $\binom{t+1}{2}$ double points,
respectively $\binom{t+1}{2}$ double points and one reduced points.   In
the first case,  the support of the points are the $\binom{t+1}{2}$ points of
intersection of $t$ general lines in $\mathbb{P}^2$.
This fact is equivalent to the statement that the points in the
support are a {\em star-configuration} of points in $\mathbb{P}^2$;
star configurations are widely studied, e.g. see \cite{CGvT2014,CvT2011}.
We prove (see Theorem \ref{doublestar}) that
this configuration is the
{\it only} configuration  of $\binom{t+1}{2}$ double points in $\mathbb{P}^2$
with $\Delta H_Z = \Delta H$.  In the second case, we show
(see Theorem \ref{puntosotto}) a similar
result by showing again that there is only one configuration of
$\binom{t+1}{2}$ double points and one reduced point that
has $\Delta H_Z = \Delta H$.

We conclude our paper with some final comments related
to how well our construction
performs, i.e., given a known valid Hilbert function
of $t$ double points, how many double points does
our procedure produce for the same valid Hilbert function.
In the case that the support of points is in generic position, we derive an asymptotic estimate.

 {\bf Acknowledgements.}  The computer algebra system
CoCoA \cite{C} played an integral role in this project.
The authors thank the hospitality of the Universit\`a di Catania
and McMaster University where
part of this work was carried out.
Carlini and Catalisano were
supported by GNSAGA of INDAM and by Miur (Italy) funds.
Guardo thanks FIR-UNICT
2014 and GNSAGA-INDAM for supporting part of the visit to McMaster
University.
Guardo's work has also
been supported by the Universit\`a degli Studi di
Catania, ``Piano della Ricerca 2016/2018 Linea di intervento 2".
Van Tuyl's research was supported by NSERC
Discovery Grant 2014-03898.


\section{Preliminaries}

We begin with a review of the relevant background; we continue to
use the notation and definitions given in the introduction.

\begin{definition}
A sequence  $\Delta H = (h_0,h_1,\ldots,h_{\sigma})$
  is a {\it valid Hilbert function of a set of points in $\mathbb{P}^2$}
  if there is an $0 < \alpha \leq \sigma$ such that
  \begin{enumerate}
  \item[$(a)$] $h_i = i+1$ if $0 \leq i < \alpha$, and
  \item[$(b)$] $h_i \geq h_{i+1}$ if $\alpha \leq i \leq \sigma$.
  \end{enumerate}
  Note that the indexing of $\Delta H$ begins with $0$.
\end{definition}

\begin{remark}\label{validhf}
It can be shown that $H: \mathbb{N} \rightarrow \mathbb{N}$ is a
Hilbert function of a set of points in $\mathbb{P}^2$ if and only
if $\Delta H (i)= H(i) - H(i-1)$ is a valid Hilbert
function.  More precisely, it was first shown by Geramita,
Maroscia, and Roberts \cite{GMR} that $H$ is the Hilbert function
of a reduced set of points in $\mathbb{P}^2$ if and only if
$\Delta H$ is the Hilbert function of an artinian quotient of
$k[x,y]$.  Using Macaulay's theorem which classifies all Hilbert
functions, one can determine all possible Hilbert functions of
artinian quotients of $k[x,y]$. In particular, one can show that
the Hilbert functions of artinian quotients of $k[x,y]$ must
satisfy the conditions of being a valid Hilbert function.   The
work of Geramita, Maroscia, and Roberts implies that if $Z
\subseteq \mathbb{P}^2$ is any set of fat points, then $\Delta
H_Z$ must satisfy the conditions $(a)$ and $(b)$ given above.
\end{remark}

\begin{definition}
Let $\Delta H = (1,2,\ldots,\alpha,h_{\alpha},\ldots,h_{\sigma})$ be a
valid Hilbert function of a set of points in $\mathbb{P}^2$.  We
define the {\it conjugate of} $\Delta H$, denoted $\Delta H^\star$, to the
be tuple
\[\Delta H^\star = (h_1^\star,\ldots,h_\alpha^\star) ~~
\mbox{where $h_i^\star = \#\{j ~|~ h_j \geq i\}$.}\]
\end{definition}

\begin{remark} The definition above is reminiscent of the conjugate
  of a partition.  Recall that
  a tuple $\lambda =(\lambda_1,\ldots,\lambda_r)$ of
   positive integers is a {\it partition} of an integer $s$ if $\sum_{i=1}^r
   \lambda_i = s$ and $\lambda_i \geq \lambda_{i+1}$ for every $i$.
   The {\it conjugate} \index{conjugate} of $\lambda$ is the tuple
   $\lambda_i^\star = \#\{j ~|~ \lambda_j \geq i\}$.   Note that $\Delta H$
   is not a partition, but $\Delta H^\star$ is a partition.  Furthermore,
   $h_1^\star = \sigma+1$ since there are $\sigma+1$ non-zero entries in
   $\Delta H$.
  \end{remark}

\begin{example} \label{runningex1}
Given a valid Hilbert function $\Delta H$, it is convenient to represent $\Delta H$ pictorially.  That is, we
make $\sigma+1$ columns of dots, where we place $h_i$ dots in the
$i$-th column.  For example, if $\Delta H = (1,2,3,4,4,3,1)$, then we
can represent $\Delta H$ pictorially as:
\[
\Delta H =
\begin{tabular}{cccccccc}
&         &           &         &$\bullet$&$\bullet$&          &         \\
&         &          &$\bullet$&$\bullet$ &$\bullet$&$\bullet$ &         \\
&         & $\bullet$&$\bullet$ &$\bullet$&$\bullet$&$\bullet$ &         \\
&$\bullet$&$\bullet$ &$\bullet$ &$\bullet$&$\bullet$&$\bullet$&$\bullet$ \\
\end{tabular}
\]
The tuple $\Delta H^\star = (7,5,4,2)$ can be read directly off of
this diagram; specifically, it is the number of dots in each row
reading from bottom to top.
\end{example}

Given a valid Hilbert function $\Delta H$, one can use
$\Delta H^\star$ to construct a set of reduced points $X \subseteq
\mathbb{P}^2$ such that  $H_X = H$ by building a suitable
$k$-configuration.   We present a specialization of this idea;  an
example appears as the first step of Example
\ref{illustrateex}.

\begin{theorem}\label{kconfig}
Let $\Delta H = (1,2,\ldots,\alpha,h_{\alpha},\ldots,h_{\sigma})$ be
a valid Hilbert function with
$\Delta H^\star = (h_1^\star,\ldots,h_\alpha^\star)$.
Let $\ell_1,\ldots,\ell_{\alpha}$ be $\alpha$ lines in $\mathbb{P}^2$ such
that no three lines meet at a point.   For $i = 1,\ldots,\alpha$,
let $X_i \subseteq \ell_i$ be any set of $h_i^\star$ points such that
$X_i \cap \ell_j = \emptyset$ for all $i \neq j$.   If
$X = X_1 \cup \cdots \cup X_\alpha$, then $H_X = H$.
\end{theorem}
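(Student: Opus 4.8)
The plan is to argue by induction on $\alpha$, the number of lines, at each step peeling off the line $\ell_1$ that carries the most points. Since $\Delta H^\star$ is a partition we have $h_1^\star \ge h_2^\star \ge \cdots \ge h_\alpha^\star$, and $h_1^\star = \sigma+1$, so $\ell_1$ supports the maximal number $\sigma+1$ of the points. The base case $\alpha=1$ is immediate: then $\Delta H=(1,1,\dots,1)$ with $\sigma+1$ entries, and $X=X_1$ consists of $\sigma+1$ collinear points, whose Hilbert function $H_X(i)=\min\{i+1,\sigma+1\}$ has first difference exactly $\Delta H$.

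For the inductive step let $L_1$ be a linear form defining $\ell_1$ and put $X''=X\setminus X_1=X_2\cup\cdots\cup X_\alpha$. Since $L_1$ vanishes on $X_1$ and at no point of $X''$, one has $(I_X:L_1)=I_{X''}$, which yields the exact sequence
\[ 0 \longrightarrow (R/I_{X''})(-1) \xrightarrow{\,\cdot L_1\,} R/I_X \longrightarrow R/(I_X+(L_1)) \longrightarrow 0. \]
The scheme $X''$ is a configuration of the same kind, now on the $\alpha-1$ lines $\ell_2,\dots,\ell_\alpha$ with $h_i^\star$ points on $\ell_i$; its conjugate partition is $(h_2^\star,\dots,h_\alpha^\star)$, the conjugate of the valid Hilbert function $\Delta H''$ obtained from $\Delta H$ by deleting the bottom row of its dot diagram. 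Concretely $\Delta H''(i-1)=h_i-1$ for $1\le i\le\sigma$, and by the induction hypothesis $H_{X''}=H''$.

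The heart of the argument, and the step I expect to be the main obstacle, is to show that $H_{R/(I_X+(L_1))}(i)=\min\{i+1,\sigma+1\}$, i.e.\ that $R/(I_X+(L_1))$ carries the Hilbert function of the $\sigma+1$ points $X_1$ regarded on $\ell_1\cong\mathbb{P}^1$. For $i\le\sigma$ this is automatic, because the ideal of $\sigma+1$ points on $\mathbb{P}^1$ vanishes in degrees $\le\sigma$ and the image of $(I_X)_i$ in $(R/L_1)_i$ lies inside it. For $i\ge\sigma+1$ one must prove the restriction map $(I_X)_i\to (I_{X_1,\ell_1})_i$ is surjective, and this is exactly where the general position of the configuration enters. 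Given $g\in(I_{X_1,\ell_1})_i$, I would lift it to some $\tilde g\in R_i$ restricting to $g$; then $\tilde g$ already vanishes on $X_1$, and I correct it to $G=\tilde g-L_1h$, choosing $h\in R_{i-1}$ so that $G$ vanishes on $X''$ as well (possible since $L_1$ is nonzero at each point of $X''$). Such an $h$ exists precisely because the points of $X''$ impose independent conditions in degree $i-1$; the inductive form of $H_{X''}$ guarantees this for every $i\ge\sigma+1$, since the top degree of $\Delta H''$ is strictly below $\sigma+1$.

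Finally I would assemble the pieces. Additivity of Hilbert functions along the exact sequence gives $H_X(i)=H_{X''}(i-1)+\min\{i+1,\sigma+1\}$, and taking first differences yields
\[ \Delta H_X(i)=\Delta H''(i-1)+\big(\min\{i+1,\sigma+1\}-\min\{i,\sigma+1\}\big), \]
where the bracketed term equals $1$ for $i\le\sigma$ and $0$ otherwise. Substituting $\Delta H''(i-1)=h_i-1$ for $1\le i\le\sigma$ collapses the right-hand side to $h_i$ on that range, while $\Delta H_X(0)=1=h_0$ and $\Delta H_X(i)=0=h_i$ for $i>\sigma$. Hence $\Delta H_X=\Delta H$, equivalently $H_X=H$, completing the induction.
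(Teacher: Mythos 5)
Your proof is correct, but it takes a genuinely different route from the paper. The paper's proof is a two-line reduction to the literature: it observes that the hypotheses force the strict inequalities $h_1^\star > h_2^\star > \cdots > h_\alpha^\star$, so that $X$ is a $k$-configuration of type $(h_\alpha^\star,\ldots,h_1^\star)$ in the sense of Roberts and Roitman, and then invokes their Theorem 1.2 to read off $H_X$. You instead give a self-contained induction on the number of lines, peeling off $\ell_1$ via the colon computation $(I_X : L_1) = I_{X''}$ (valid since $L_1$ vanishes on $X_1$ and is a nonzerodivisor on $R/I_{X''}$ because $X_i \cap \ell_1 = \emptyset$ for $i \neq 1$), the standard exact sequence, and a surjectivity argument for the restriction $(I_X)_i \to (I_{X_1,\ell_1})_i$ in degrees $i \geq \sigma+1$; your bookkeeping is accurate throughout -- deleting the bottom row of the dot diagram does produce a valid $\Delta H''$ with conjugate $(h_2^\star,\ldots,h_\alpha^\star)$ and top nonzero degree $\sigma'' \leq \sigma-1$, which is exactly what makes $X''$ impose independent conditions in degree $i-1 \geq \sigma$ and hence makes the interpolation correction $G = \tilde g - L_1 h$ available. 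In effect you have reproved, in the special case needed here, the result the paper imports from \cite{RR}, since this exact-sequence induction is the standard engine behind the $k$-configuration theorems. What each approach buys: the paper's citation is shorter and situates $X$ within a known class; yours is elementary and self-contained, and it even reveals that the hypothesis that no three lines meet at a point is never used -- your argument only needs $X_i \cap \ell_j = \emptyset$ for $i \neq j$, so it proves a marginally more general statement (note the hypotheses already force the $\ell_i$ to be distinct). One could tighten a single phrase: where you say the existence of $h$ holds "precisely because" the points of $X''$ impose independent conditions in degree $i-1$, what you use is surjectivity of the evaluation map $R_{i-1} \to k^{|X''|}$, which is indeed equivalent to $H_{X''}(i-1) = \deg X''$, exactly what your inductive hypothesis supplies.
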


\begin{proof} We sketch out the main ideas.   Our hypotheses
on the $X_i$'s implies that no point of $X$ is of the form
$\ell_i \cap \ell_j$ with $i \neq j$.  One can verify
that $h_1^\star > h_2^\star > \cdots > h_\alpha^\star$.  But
then $X$ is a $k$-configuration of type $(h_\alpha^\star,\ldots,h_1^\star)$
as first defined by
Roberts and Roitman \cite{RR} (and later generalized
by Geramita, Harima, and Shin \cite{GHS:1}).  In particular,
one can use \cite[Theorem 1.2]{RR} to compute the Hilbert function
of $X$ to show that it is the same as $H$.
\end{proof}

We now recall some crucial results from the work of
Cooper, Harbourne, Teitler \cite{CHT}.  We have
specialized their definitions to $\mathbb{P}^2$.

\begin{definition}[{\cite[Definition 1.2.5]{CHT}}] \label{susan}
Let $Z = m_1P_1 + m_2P_2 +\dots + m_{ s}P_{
s}$ be a fat point scheme in $\mathbb{P}^{2}$. Fix a
sequence $\ell_1,\dots ,\ell_{n}$ of lines in $\mathbb{P}^2$, not
necessarily distinct.
\begin{enumerate}
\item[(a)] Define the fat point schemes $Z_0,\dots ,Z_{n}$ by $Z_0 = Z$ and
$Z_j = Z_{j-1} : \ell_j$ for $1 \leq j \leq n$.  That is,
$Z_j$ is the scheme defined by $I_{Z_{j-1}}:\langle L_j \rangle$
if $L_j$ is the linear form defining $\ell_j$ and $I_{Z_j}$
is the ideal defining $Z_j$.
\item[(b)] The sequence $\ell_1,\dots ,\ell_{n}$
{\it totally reduces} $Z$ if $Z_{n} =\emptyset$ is the empty scheme.
This statement is equivalent to the property
that for each fat point $m_iP_i$, there are at least $m_i$ indices
$\{j_1,\ldots,j_{m_i}\}$ such that each $\ell_{j_k}$ passes through $P_i$.
\item[(c)] We associate with $Z$ and
the sequence $\ell_1, \dots ,\ell_{n}$ an integer vector
\[{\bf d} = {\bf d}(Z;\ell_1,\cdots ,\ell_{n}) =
(d_1,\dots , d_{n}),\]
where $d_j = \deg(\ell_j \cap Z_{j-1})$, the
degree of the scheme theoretic intersection of $\ell_j$ with
$Z_{j-1}$. We refer to ${\bf d}$ as the {\it reduction vector} for $Z$
induced by the sequence $\ell_1,\dots ,\ell_{n}$. We will say that
${\bf d}$ is a {\it full reduction vector} for $Z$ if $\ell_1,\dots
,\ell_{n}$ totally reduces $Z$.
\end{enumerate}
\end{definition}

\begin{remark}
If $Z$ is a fat point scheme, and if $P_{i_1},\ldots,P_{i_j}$ are
all the points in the support of $Z$ that lie on the line $\ell$,
then $\deg(\ell \cap Z) = m_{i_1}+\cdots + m_{i_j}$, i.e., the sum
of the multiplicities of the  points lying on  $\ell \cap Z$.  The
scheme $Z:\ell$ is the scheme that we  obtain by reducing the
multiplicities of $P_{i_1},\ldots,P_{i_j}$ by one (or removing the
point if its multiplicity is  1), and leaving the other
multiplicities alone.
\end{remark}

\begin{example} Consider three non-collinear points $P_1,P_2,P_3$
and the set of fat points $Z= 3P_1+3P_2+2P_3$.
Let $\ell_1=\ell_2$, $\ell_3$ and
$\ell_4$ be the lines through $P_1P_2$, $P_1P_3$,  and $P_2P_3$,
respectively. Then a full reduction vector for this scheme is
$(6,4,3,2)$. The pictures below show how to build this vector.
For example, in Figure \ref{firstfig}, the line $\ell_1$
passes through $P_1$ and $P_2$.  Since the multiplicity
of $P_1$ is three, and the same for $P_2$, we have $d_1 = 3+3 =6$.
We then reduce the multiplicity of $P_1$ and $P_2$ by one, as in
Figure \ref{secondfig}.  Then $d_2 = 2+2 =4$.
\begin{figure}[!htbp]
\centering
\mbox{%
\begin{minipage}{.50\textwidth}
\begin{tikzpicture}[scale=.9]
\clip(-0.3,-0.3) rectangle (5.5,3.5);
\draw [line width=1.2pt] (0.13809,1.)-- (4.7,1);
\draw (-0.4,1.5) node[anchor=north west] {$\ell_1$};
\begin{scriptsize}
\draw [fill=black] (1.,1.) circle (3pt);
\draw[color=black] (0.9,1.4) node {$3P_1$};
\draw [fill=black] (4.,1.) circle (3pt);
\draw[color=black] (4.3,1.4) node {$3P_2$};
\draw[color=black] (3,2.5) node {$2P_3$};
\draw [fill=black] (3.,3.) circle (3pt);
\end{scriptsize}
\end{tikzpicture}
\caption{}\label{firstfig}
\end{minipage}%
\hskip1cm
\begin{minipage}{0.50\textwidth}
\begin{tikzpicture}[scale=.9]
\clip(-0.3,-0.3) rectangle (5.5,3.5);
\draw [line width=1.2pt] (0.13809,1.)-- (4.7,1);
\draw (-0.4,1.5) node[anchor=north west] {$\ell_2$};
\begin{scriptsize}
\draw [fill=black] (1.,1.) circle (3pt);
\draw[color=black] (0.9,1.4) node {$2P_1$};
\draw [fill=black] (4.,1.) circle (3pt);
\draw[color=black] (4.3,1.4) node {$2P_2$};
\draw[color=black] (3,2.5) node {$2P_3$};
\draw [fill=black] (3.,3.) circle (3pt);
\end{scriptsize}
\end{tikzpicture}
\caption{}\label{secondfig}
\end{minipage}
}
\end{figure}
The line $\ell_3$
in Figure \ref{thirdfig}
passes through one point of multiplicity one and one of multiplicity
two, thus giving $d_3 = 3$.  Note that when we reduce
each multiplicity, the point $P_1$ is removed.  In the last step,
 we use the line $\ell_4$ to get $d_4 =2$ as in Figure \ref{fourthfig}.

\begin{figure}[!htbp]
\centering
\mbox{%
\begin{minipage}{.50\textwidth}
\begin{tikzpicture}[scale=.9]
\clip(-0.3,-0.3) rectangle (5.5,3.5);
\draw [line width=1.2pt] (0.24,0.24)-- (4.,4.);
\draw (-0.4,1.5) node[anchor=north west] {$\ell_3$};
\begin{scriptsize}
\draw [fill=black] (1.,1.) circle (3pt);
\draw[color=black] (0.9,1.4) node {$P_1$};
\draw [fill=black] (4.,1.) circle (3pt);
\draw[color=black] (4.3,1.4) node {$P_2$};
\draw[color=black] (3,2.5) node {$2P_3$};
\draw [fill=black] (3.,3.) circle (3pt);
\end{scriptsize}
\end{tikzpicture}
\caption{}\label{thirdfig}
\end{minipage}%
\hskip1cm
\begin{minipage}{0.50\textwidth}
\begin{tikzpicture}[scale=.9]
\clip(-0.3,-0.3) rectangle (5.5,3.5);
\draw [line width=1.2pt] (2.5,4.)-- (4.38,0.24);
\draw (4.6,1.5) node[anchor=north west] {$\ell_4$};
\begin{scriptsize}
\draw [fill=black] (4.,1.) circle (3pt);
\draw[color=black] (4.3,1.4) node {$P_2$};
\draw[color=black] (3,2.5) node {$P_3$};
\draw [fill=black] (3.,3.) circle (3pt);
\end{scriptsize}
\end{tikzpicture}
\caption{}\label{fourthfig}
\end{minipage}
}
\end{figure}

\end{example}

The next result is another specialization of Cooper,
Harbourne and Teitler \cite{CHT}.

\begin{theorem}\label{susan2}
Let $Z = Z_0$ be a fat point scheme in $\mathbb{P}^2$ with
full reduction vector ${\bf d} = (d_1,\dots , d_{n})$.
If $d_1 > d_2 > \cdots > d_{n}$,
then $H_Z$ only depends on ${\bf d}$.
\end{theorem}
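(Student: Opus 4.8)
The plan is to track the Hilbert function along the reduction sequence $Z = Z_0, Z_1, \dots, Z_n = \emptyset$ and to show that $H_{Z_{j-1}}$ is computable from $H_{Z_j}$ together with the single integer $d_j$. The engine is the restriction exact sequence attached to the line $\ell_j$. Writing $L_j$ for the linear form defining $\ell_j$ and using the defining relation $I_{Z_j} = I_{Z_{j-1}} : \langle L_j \rangle$, multiplication by $L_j$ carries $I_{Z_j}$ into $I_{Z_{j-1}}$ and is injective, while restriction of forms to $\ell_j$ gives a map $\rho_j$. First I would record the sequence
$$ 0 \longrightarrow I_{Z_j}(-1) \xrightarrow{\ \times L_j\ } I_{Z_{j-1}} \xrightarrow{\ \rho_j\ } I_{\ell_j \cap Z_{j-1},\, \ell_j}, $$
and check exactness at the first two spots: a form of $I_{Z_{j-1}}$ restricts to $0$ on $\ell_j$ exactly when it is divisible by $L_j$, so $\ker \rho_j = L_j \cdot (I_{Z_{j-1}} : L_j) = L_j \cdot I_{Z_j}$, which is the image of $\times L_j$.

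Taking graded pieces in each degree $i$ gives
$$ \dim_k (I_{Z_{j-1}})_i = \dim_k (I_{Z_j})_{i-1} + \dim_k (\operatorname{im}\rho_j)_i, \qquad \dim_k(\operatorname{im}\rho_j)_i \le \dim_k (I_{\ell_j \cap Z_{j-1},\,\ell_j})_i. $$
Since $\ell_j \cap Z_{j-1}$ is a zero-dimensional subscheme of $\ell_j \cong \pr^1$ of degree $d_j$, its ideal inside $\ell_j$ has the fully explicit Hilbert function $\max(0,\, i+1-d_j)$, depending only on $d_j$. Hence, provided each $\rho_j$ is surjective onto this ideal in every degree, the displayed relation becomes an exact recursion expressing $\dim_k(I_{Z_{j-1}})_i$ in terms of $\dim_k(I_{Z_j})_{\bullet}$ and $d_j$ alone. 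I would then run this recursion backwards from the empty scheme: because $\ell_1, \dots, \ell_n$ totally reduces $Z$, we have $Z_n = \emptyset$, so $I_{Z_n} = R$ has the known Hilbert function $\dim_k R_i = \binom{i+2}{2}$. Iterating the exact recursion from $j=n$ down to $j=1$ expresses $\dim_k(I_Z)_i$, and therefore $H_Z(i) = \binom{i+2}{2} - \dim_k(I_Z)_i$, as a function of the tuple $(d_1,\dots,d_n)$ only, which is exactly the assertion.

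The one step that is not formal, and the main obstacle, is the surjectivity of each $\rho_j$: that every degree-$i$ form on $\ell_j$ vanishing on $\ell_j \cap Z_{j-1}$ lifts to a degree-$i$ form on $\pr^2$ vanishing on all of $Z_{j-1}$. In general the image of $\rho_j$ can be strictly smaller, in which case the recursion degrades to the inequality above and only bounds $H_Z$. This is precisely where the hypothesis $d_1 > d_2 > \cdots > d_n$ enters: the strict decrease is the combinatorial input in the results of Cooper, Harbourne, and Teitler \cite{CHT} that rules out any obstruction to lifting at each stage, forcing the Castelnuovo-type sequences to split off the expected line ideal in every degree and making the recursion exact throughout. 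I would therefore isolate the surjectivity of $\rho_j$ as the key lemma, prove it by invoking the relevant statement of \cite{CHT} specialized to lines in $\pr^2$, and then assemble the backward iteration to conclude that $H_Z$ depends only on $\mathbf{d}$.
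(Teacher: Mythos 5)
Your proposal is correct in substance but organized quite differently from the paper's proof, which never touches the restriction sequence: the paper simply observes that $d_1 > d_2 > \cdots > d_n$ implies $d_i - d_{i+p} \geq p$ for all $i$ and $p$, so that $\mathbf{d}$ is a GMS vector in the sense of Definition 2.2.1 of \cite{CHT}, and then quotes Theorem 2.2.2 and Section 2.3 of \cite{CHT}, which give the closed formula $H_Z(t) = \sum_{i=0}^{n-1}\binom{\min\{t-i+1,\, d_{i+1}\}}{1}$ --- manifestly a function of $\mathbf{d}$ alone. You instead reconstruct the mechanism underlying such results: the sequence $0 \to I_{Z_j}(-1) \xrightarrow{\times L_j} I_{Z_{j-1}} \xrightarrow{\rho_j} I_{\ell_j \cap Z_{j-1},\,\ell_j}$, the kernel computation $\ker\rho_j = L_j\cdot I_{Z_j}$, the count $\max(0,\,i+1-d_j)$ for a degree-$d_j$ subscheme of $\pr^1$, and the backward telescoping from $Z_n=\emptyset$; all of that is sound, and it correctly isolates surjectivity of the $\rho_j$ as the only non-formal step. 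Two cautions about how you discharge that step. First, to invoke \cite{CHT} you still need the one-line verification the paper performs: strict decrease of $\mathbf{d}$ is not literally the hypothesis of their theorem, but it implies the GMS inequalities $d_i - d_{i+p}\geq p$, which are. Second, \cite{CHT} does not isolate a lifting/surjectivity lemma in the form you want; what it supplies is the Hilbert function formula itself. Surjectivity of every $\rho_j$ in every degree then does follow a posteriori (your inequality chain bounds $H_Z$ below by the formula, so equality with the formula forces equality at each link), but at that point your recursion is logically redundant: citing Theorem 2.2.2 already yields the theorem, which is exactly the paper's shorter route. What your version buys is explanatory value --- it exhibits where the hypothesis enters and would give a self-contained proof if the lifting statement were proved directly; what the paper's version buys is brevity and the safety of citing a statement that exists verbatim in \cite{CHT}.
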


\begin{proof}
From our assumption, we have $d_{i}-d_{i+1}\geq 1$ for all
$i=1,\dots,n$. Then  $
d_i-d_{i+p}\geq p$ for all $i$.
This implies that ${\bf d} = (d_1,\dots , d_{n})$ is a GMS vector (see
Definition 2.2.1 in \cite{CHT}).  Then Theorem 2.2.2
and Section 2.3 of \cite{CHT} imply
$$H_{Z}(t)=\sum_{i=0}^{n-1} \binom{\min\{t-i+1,d_{i+1}\}}{1},$$
that is, $H_Z$ can be computed directly from ${\bf d}$.
\end{proof}


\section{An operation that preserves the Hilbert function} \label {construction}

In this section, we first show that under certain conditions, we
can degenerate a fat point scheme $Z$ consisting of double points and
reduced points to make a new fat point scheme $Z'$ consisting of
one additional double point, and three less reduced
points. Furthermore, the two schemes will have the same Hilbert
function. Note that degeneration techniques have been successfully used in other situations, e.g. see \cite{sundials,marvialessandro}.

By repeatedly applying this procedure, we can do the following.
Let $\Delta H$ be a valid Hilbert function of a set of points.
 Theorem \ref{kconfig} implies that there
is a set of reduced points $X$ with Hilbert function
$\Delta H$ that satisfies the hypotheses of our procedure given
below.
We can then remove three points from $X$ and add a double point
to make a set $Z$ of fat points  with the same Hilbert function as
$\Delta H$.  We can continue this procedure (provided the hypotheses
of our procedure are still satisfied) to build sets of fat points
consisting of double and reduced points that have Hilbert function
$\Delta H$.  This procedure will then allow us to
give an answer to Question \ref{weakerq}.

We now state and prove the main step in our procedure.

\begin{theorem}\label{procedure}
Let $\ell_1,\ldots,\ell_{n}$ be $n$ lines in $\mathbb{P}^2$ such
that no three lines meet at a point.   Let $P_{i,j} = \ell_i \cap
\ell_j$ for $1 \leq i < j \leq {n}$.  Suppose that $Z$ is a set of
double points and reduced points in $\mathbb{P}^2$ that satisfies
the following conditions:
\begin{enumerate}
\item[$(a)$] ${\rm Supp}(Z) \subseteq \bigcup_{i=1}^n \ell_i$, i.e.,
all the points in the support lie on the lines $\ell_i$.
\item[$(b)$] If $2P$ is a double point of $Z$, then $P = P_{i,j}$
for some $i < j$, i.e., all double points of $Z$ lie at an
intersection point of two $\ell_p$'s.
\item[$(c)$] If $Q$ is a reduced point of $Z$ and
$Q \in \ell_i$, then $Q \neq \ell_i \cap \ell_p$ for $p\neq i$,
i.e., the reduced points do not lie at an intersection point.
\item[$(d)$] If
\[
d_j = \deg(Z_{j-1} \cap \ell_j)
\]
for $j=1,\ldots,n$, then $d_1 > d_2 > \cdots > d_n$, where $Z_j=Z_{j-1}:\ell_{i}$
and we set $Z_0=Z$.

\item[$(e)$] There exist $i,j$ with $i < j$ such that $Z$ contains two
reduced points $Q_1,Q_2 \in \ell_i$ and a reduced point $R \in \ell_j$,
but $2P_{i,j}$ is not a double point of $Z$.
\end{enumerate}
Let $Z'$ be the set of double and reduced points obtained by
adding the double point $2P_{i,j}$ to $Z$, and removing
the reduced points $\{R,Q_1,Q_2\}$.  Then $Z$ and $Z'$ have
the same Hilbert function.
\end{theorem}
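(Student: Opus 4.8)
The plan is to show that $Z$ and $Z'$ have the \emph{same} full reduction vector with respect to the sequence $\ell_1,\ldots,\ell_n$, and then invoke Theorem \ref{susan2}. Since that vector is strictly decreasing by hypothesis $(d)$, Theorem \ref{susan2} guarantees that each Hilbert function is determined by the reduction vector alone; equality of the vectors then forces $H_Z = H_{Z'}$.

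First I would check that $\ell_1,\ldots,\ell_n$ totally reduces both schemes, so that the reduction vectors are genuinely \emph{full}. Using the criterion in Definition \ref{susan}$(b)$, a reduced point lies on a single line (by $(c)$) and is killed after one pass, while a double point $2P_{p,q}$ lies on exactly the two lines $\ell_p,\ell_q$ (no three lines being concurrent) and is killed after those two passes. Since $Z'$ differs from $Z$ only by deleting the reduced points $R,Q_1,Q_2$ and inserting $2P_{i,j}$, the same reasoning applies verbatim to $Z'$.

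The heart of the argument is a term-by-term comparison of $\mathbf{d}=(d_1,\ldots,d_n)$ for $Z$ and $\mathbf{d}'=(d'_1,\ldots,d'_n)$ for $Z'$. For each line $\ell_k$ I would track the multiplicity of a point $P$ on $\ell_k$ in the partially reduced scheme, which by the remark following Definition \ref{susan} equals $\max\{0,\, m_P - \#\{l<k : P\in\ell_l\}\}$, and then sum over the points of $\ell_k$. The four points where $Z$ and $Z'$ disagree are $P_{i,j}$ (absent in $Z$, since it is an intersection point and so cannot be reduced by $(c)$, and it is not a double point by $(e)$; double in $Z'$), together with $Q_1,Q_2\in\ell_i$ and $R\in\ell_j$. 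For every $k\neq i,j$ none of these four points lies on $\ell_k$, so $d_k=d'_k$ immediately.

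The balance on the two distinguished lines is the step I expect to be the crux, and it is exactly where the asymmetry in $(e)$ (two reduced points on $\ell_i$ but one on $\ell_j$) becomes essential. Because $i<j$, when we process $\ell_i$ no earlier line has yet passed through $P_{i,j}$, so the inserted double point contributes its full multiplicity $2$ to $d'_i$, precisely offsetting the loss of $Q_1,Q_2$; hence $d'_i=d_i$. When we later process $\ell_j$, the line $\ell_i$ has already reduced $P_{i,j}$ once, so it now contributes only $1$ to $d'_j$, precisely offsetting the loss of the single point $R$; hence $d'_j=d_j$. With all entries matched we obtain $\mathbf{d}=\mathbf{d}'$, and since this common vector is strictly decreasing by $(d)$, Theorem \ref{susan2} applies to both schemes and yields $H_Z=H_{Z'}$.
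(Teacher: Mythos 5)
Your proposal is correct and follows essentially the same route as the paper's proof: verify that $\ell_1,\ldots,\ell_n$ totally reduces both $Z$ and $Z'$, show the reduction vectors agree entry by entry (with the $2$-for-$2$ balance on $\ell_i$ and the $1$-for-$1$ balance on $\ell_j$ coming from the once-reduced $P_{i,j}$), and conclude via hypothesis $(d)$ and Theorem \ref{susan2}. Your explicit multiplicity formula $\max\{0,\, m_P - \#\{l<k : P\in\ell_l\}\}$ just makes precise the paper's informal observation that points other than the four changed ones contribute identically.
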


\begin{proof}
We begin by observing that ${\rm Supp}(Z')$ is also contained in
$\bigcup_{i=1}^n \ell_i$ by our construction since the only point
we added to the support is $P_{i,j}$.  This observation and $(a)$
imply that the lines $\ell_1,\ldots,\ell_n$ totally reduce both
$Z$ and $Z'$.  Indeed, if $Q$ is a reduced point of $Z$,
respectively $Z'$, then it lies on some distinct $\ell_i$ by
$(c)$. If $2P$ is a double point of $Z$, or $Z'$, then $P = \ell_i
\cap \ell_j$ for some $i < j$ by $(b)$ (or by the construction of
$Z'$), so there are at least two $\ell_p$'s that pass through
$2P$.   It follows from the equivalent statement in Definition
\ref{susan} that the $\ell_i$'s totally reduce $Z$ and $Z'$.

To finish the proof, we claim it is enough to show that
\[ \deg(Z_{j-1} \cap \ell_j) =
\deg(Z'_{j-1} \cap \ell_j) ~~\mbox{for all $1 \leq j \leq
n$.}\]
Indeed, if this fact is true, then part $(d)$ and Theorem
\ref{susan2} imply that the Hilbert function of $Z$ and $Z'$ are the same.

To verify the claim, we first observe that our change from $Z$ to
$Z'$ only effects the points on the lines $\ell_i$ and $\ell_j$,
and consequently, could only effect the value of $\deg(Z'_{p-1} \cap \ell_p)$ for $p =i$ and $j$. In  the
computation of $\deg(Z_{i-1} \cap \ell_i)$
we get a contribution of two from each reduced point $Q_1$ and
$Q_2$.  Those two points do not contribute to  $\deg((Z'_{i-1} \cap \ell_i)$ since we have removed them, but
the fat point $2P_{i,j}$ (which is not in $Z$) contributes two to
the degree.  The other points of $Z_{i-1}$ and
$Z'_{i-1}$ on $\ell_i$ remain the same, so
they contribute equally to the degree.  So $\deg(Z_{i-1} \cap \ell_i) = \deg(Z'_{i-1}\cap \ell_i).$

When we compute $\deg(Z_{j-1} \cap \ell_j)$
we get a contribution of one from $R$.  This point does not
contribute to  $\deg(Z'_{j-1} \cap \ell_j)$
since it was removed.  We, however, get a
contribution of one from $P_{i,j}$.   (The multiplicity of
$P_{i,j}$ was dropped from two to one when we formed
$Z'_{i}$.)  As we mentioned above, the other
points on $\ell_j$ contribute the same. So, again we have
$\deg(Z_{j-1} \cap \ell_j) = \deg(Z'_{j-1} \cap \ell_j)$.  This completes the proof.
\end{proof}

\begin{remark}
  We note that the hypothesis of Theorem \ref{procedure} are sufficient conditions
  which allow us, for example, to apply the results  of \cite{CHT}.
  Consider condition $(d)$ on the degrees $d_i$. If some of the inequalities
  do not hold, then the conclusion might be
  false. Let $Z$ be a set of five points supported on the union of the
  lines $\ell$ and $\ell'$, namely two points on the former and
  three on the latter. Thus $\Delta H_Z=(1,2,2)$. If we set $\ell_1=\ell$
  and $\ell_2=\ell'$, we get $d_1=2$ and $d_2=3$, and condition $(c)$ is not
  satisfied. Indeed, the resulting set $Z'$ is such that
  $
  \Delta H_{Z'}=(1,2,1,1)$.
  Hence the two Hilbert functions are not equal.
\end{remark}

\begin{remark}
There are also counterexamples when the degrees $d_i$ are not all distinct.
  Consider, for example, the complete intersection of a cubic
  $\ell_1\cup\ell_2\cup\ell_3$ with the union of two distinct lines.
  This set of six points lie on a conic, but applying our construction
  leads to a scheme of one double point and three simple points not
  lying on a conic. That is, $d_1=d_2=d_3=2$, and our construction
  does not preserve the Hilbert function in degree two.
\end{remark}

\begin{remark}
When we construct $Z'$, then $Z'$ will also satisfy the
hypotheses $(a)-(d)$ of Theorem \ref{procedure}.  If
$Z'$ also satisfies $(e)$, then we can add another double point, and
so on, until hypothesis $(e)$ is no longer satisfied.
\end{remark}

We expand upon the above remark.
Given a valid Hilbert function $\Delta H$ with $\Delta H^{\star}
= (h_1^\star,\ldots,h_\alpha^\star)$,
we present a construction based upon
Theorem \ref{procedure} which will allow us to produce
a set $Z$ of double and simple points such that
$\Delta H=\Delta H_Z$.   The rough idea behind our construction is
to start with a set of reduced points with the correct Hilbert function,
and then, in a controlled fashion, repeatedly replace three reduced points
with a double point, and use Theorem \ref{procedure} to show that
the Hilbert function does not change after each iteration.
Below, we will use the notation $n_+ = \max\{n,0\}$.

\begin{construction}\label{algorithm}\hspace{.1cm}
\vspace{.1cm}

\noindent
\begin{tabular}{rl}
{\rm {\bf INPUT:}}& A valid Hilbert function $\Delta H$ with
$\Delta H^\star = (h_1^\star,\ldots,h_\alpha^\star)$.\\
{\rm {\bf OUTPUT:}}& A scheme $Z$ of double points and reduced points
in $\mathbb{P}^2$ with $H_Z = H$.
\end{tabular}
\vspace{.25cm}

\noindent {\rm STEP 0.}
Let $\ell_1,\ldots,\ell_{\alpha}$ and $P_{i,j}$ be as
in Theorem \ref{procedure}.
Let $Z_0$ be a set reduced points of $\mathbb{P}^2$
with $H_{Z_0} = H$ as constructed in Theorem \ref{kconfig} with
$Z_0 \subseteq \bigcup_{i=1}^\alpha \ell_i$
such that $|Z_0 \cap \ell_i| = h_i^\star$.   Continue to {\rm STEP $1$}.


\noindent For $n \geq 1$:

\noindent {\rm STEP $n$}.
Set ${\bf h_n} = ((h_n^\star-(n-1))_+,\ldots,(h_\alpha^\star-(n-1))_+)$
and
\[s_n = \#\{k ~|~ n+1 \leq k \leq \alpha ~\mbox{and}~ h^\star_k \geq n \}.\]
If $s_n = 0$, then return $Z_{n-1}$.  Otherwise, let
\[t_n =  \min\left\{\left\lfloor \frac{(h_n^\star-(n-1))_+}{2}\right\rfloor, s_n \right\}.\]
Remove $2t_n$ points on $\ell_n$
and one point on each $\ell_j$ for $j=n+1,\dots,n+t_n$ from $Z_{n-1}$, and
add the double points  $2P_{n,j}$ where $P_{n,j} = \ell_n\cap \ell_j$, for
$j=n+1,\dots,n+t_n$.  Let $Z_{n}$ denote the resulting scheme.
Continue to {\rm STEP $n+1$}.
\end{construction}

\begin{proof} We verify that Construction \ref{algorithm}
produces the stated output.  First, note that the construction will stop
at (or before) STEP $\alpha$ because $s_\alpha=0$.

We now show that the scheme $Z_n$ of double and reduced points
construced at STEP $n$ has the property that $H_{Z_n} = H$.
We proceed by induction on $n$.  If $n=0$, then the set of
reduced points $Z_0$ has the property $H_{Z_0} = H$ by Theorem \ref{kconfig}.

So, now assume that $n \geq 1$, and that $Z_{n-1}$ satisfies the induction
hypothesis. We first observe that the value $(h_i^\star-(n-1))_+$ in the
tuple
${\bf h_n} =
((h_n^\star-(n-1))_+,\ldots,(h_\alpha^\star-(n-1))_+)$ counts the minimal possible
number of reduced points on $\ell_i$ for $i=n,\ldots,\alpha$ after
constructing $Z_{n-1}$.   This is because in each of STEP $1$ through $n-1$,
we used at most one reduced point on $\ell_i$ when constructing $Z_j$
with $0 \leq j \leq n-1$.

 If $s_n =0$, then our procedure terminates with
$Z_{n-1}$, which, by induction, is a set of double and reduced points
with $H_{Z_{n-1}}=H$.  Now suppose that $s_n > 0$, i.e., there exists
a $n+1 \leq k \leq \alpha$ such that $h_k^\star \geq n$, or equivalently,
$h_k^\star-(n-1) \geq 1$.   In fact, because
$h_1^\star > h_2^\star > \cdots > h_n^{\star} > \cdots > h_{k}^\star > \cdots > h_\alpha^\star$, we can assume $h_{n+j}^\star - (n-1) \geq 1$ for $j=1,\dots,s_n$,
and also $h_n^\star-(n-1) \geq 2.$  Consequently,
\[t_n =  \min\left\{\left\lfloor \frac{(h_n^\star-(n-1))_+}{2}\right\rfloor, s_n \right\} \geq 1.\]
From our description of the entries of ${\bf h_n}$, it follows that
we can find $2t_n$ reduced points on $\ell_n$ and a reduced point
on each of the lines $\ell_{n+1},\ldots,\ell_{n+t_n}$ in $Z_{n-1}$.
For each $j = 1,\ldots,t_n$, we remove two reduced points from $\ell_n$ and
one reduced point from $\ell_{n+j}$, and replace these three reduced
points  with the double $2P_{n,n+j}$ where $P_{n,n+j} = \ell_n \cap \ell_{n+j}$.
By repeatedly applying Theorem \ref{procedure}, each time we add
a new double point and remove the reduced points, the new scheme has
the same Hilbert function.    Consequently,
the scheme $Z_{n}$ produced by STEP $n$ satisfies
$H_{Z_n} = H_{Z_{n-1}} = H$, as desired.
\end{proof}

\begin{example}\label{illustrateex}
We illustrate Construction \ref{algorithm}
with the valid Hilbert function
$\Delta H = (1,2,3,4,2)$.  In this case $\Delta H^\star = (5,4,2,1)$.   Fix
four general lines $\ell_1,\ell_2,\ell_3,$ and $\ell_4$, i.e., no
three of the lines meet at a point.
If we
place five points on $\ell_1$, four points on $\ell_2$, two points
on $\ell_3$, and one point on $\ell_4$, as in the  Figure
\ref{step1}, then the set of reduced points $Z_0$ has Hilbert
function $\Delta H_X = \Delta H$  (by Theorem \ref{kconfig}).  The
construction of $Z_0$ is STEP $0$ of Construction \ref{algorithm}.

For STEP $1$, we let ${\bf h_1}= (5,4,2,1)$ and $s_1 = 3$.  Since $s_1 \neq 0$,
we let $t_1 = \min\{\lfloor \frac{5}{2} \rfloor,3\} = 2$.
We remove $2\cdot 2 = 4$ points from $\ell_1$, and $1$ point from $\ell_2$
and $1$ point from $\ell_3$, and we add the double points $2P_{1,2}$ and
$2P_{1,3}$ to make the scheme $Z_1$ as in Figure \ref{step2}.  The double
points are denoted with a $2$ in the figure.   Note that
by Theorem \ref{procedure} this scheme has the same Hilbert function as
$Z_0$. Roughly speaking, we are ``merging'' two points on $\ell_1$ with
a third point on $\ell_2$ (or $\ell_3$) to make the
double point $2P_{1,2}$ (or $2P_{1,3}$).

\begin{figure}[!htbp] \label{fig1}
\centering
\mbox{%
\begin{minipage}{0.50\textwidth}
\begin{tikzpicture}[scale=0.6]
\clip(-0.7,-2.5) rectangle (10,4);
\draw [line width=0.8pt] (0.43356525262598167,-0.837313686104764)-- (8.854928961644573,1.7840629353535518);
\draw [line width=0.8pt] (2.435895714524843,-1.8956076501016481)-- (8.113055412043682,3.8363202135355516);
\draw [line width=0.8pt] (6.112048993437138,-2.45813447524598)-- (6.362749237717014,3.943078428700201);
\draw [line width=0.8pt] (0.4876183843762104,0.48042166509819506)-- (8.859487147904208,-1.6315379685322804);
\draw (0.5792786199095276,0.44057751847052556) node[anchor=north west] {$\ell_1$};
\draw (0.8937095741126032,-0.5689113344972426) node[anchor=north west] {$\ell_2$};
\draw (2.99543226799632,-1.429459209158291) node[anchor=north west] {$\ell_3$};
\draw (6.288682788333798,-1.5949491850546464) node[anchor=north west] {$\ell_4$};
\begin{scriptsize}
\draw [color=black] (7.401831311192663,-1.2638171676195675) circle (3.5pt);
\draw [color=black] (6.961471469000992,-1.152728205614772) circle (3.5pt);
\draw [color=black] (7.788999001875781,-1.36148741989716) circle (3.5pt);
\draw [color=black] (8.228222220205335,-1.4722896473886444) circle (3.5pt);
\draw [color=black] (8.563585253254768,-1.556891211079881) circle (3.5pt);
\draw [fill=black] (7.294492047969021,1.2983348530491288) circle (3.5pt);
\draw [fill=black] (7.658795156625736,1.4117340227269093) circle (3.5pt);
\draw [fill=black] (8.027270583061497,1.5264319388620566) circle (3.5pt);
\draw [fill=black] (8.404090645055357,1.6437273479119399) circle (3.5pt);
\draw [color=black] (7.197494685265647,2.911926974612835) ++(-3.5pt,0 pt) -- ++(3.5pt,3.5pt)--++(3.5pt,-3.5pt)--++(-3.5pt,-3.5pt)--++(-3.5pt,3.5pt);
\draw [color=black] (7.468216369200496,3.185260339555014) ++(-3.5pt,0 pt) -- ++(3.5pt,3.5pt)--++(3.5pt,-3.5pt)--++(-3.5pt,-3.5pt)--++(-3.5pt,3.5pt);
\draw [fill=black] (6.336484434722719,3.2724504589125343) ++(-3.5pt,0 pt) -- ++(3.5pt,3.5pt)--++(3.5pt,-3.5pt)--++(-3.5pt,-3.5pt)--++(-3.5pt,3.5pt);
\end{scriptsize}
\end{tikzpicture}
\caption{The set $Z_0$ with $H_{Z_0} = H$} \label{step1}
\end{minipage}%
\hskip5pt
\begin{minipage}{0.50\textwidth}
\begin{tikzpicture}[scale=0.6]
\clip(-0.7,-2.5) rectangle (10,4);
\draw [line width=0.8pt] (0.43356525262598167,-0.837313686104764)-- (8.854928961644573,1.7840629353535518);
\draw [line width=0.8pt] (2.435895714524843,-1.8956076501016481)-- (8.113055412043682,3.8363202135355516);
\draw [line width=0.8pt] (6.112048993437138,-2.45813447524598)-- (6.362749237717014,3.943078428700201);
\draw [line width=0.8pt] (0.4876183843762104,0.48042166509819506)-- (8.859487147904208,-1.6315379685322804);
\draw (0.5743549512051723,0.4195151579018995) node[anchor=north west] {$\ell_1$};
\draw (0.8978126313662312,-0.5949657480577836) node[anchor=north west] {$\ell_2$};
\draw (3.0002875524131136,-1.4477178139369373) node[anchor=north west] {$\ell_3$};
\draw (6.293674841325713,-1.6094466540174666) node[anchor=north west] {$\ell_4$};
\begin{scriptsize}
\draw [fill=black] (2.8,-0.09) circle (3.5pt);
\draw[color=black] (2.9,.3) node {$2$};
\draw [fill=black] (3.92,-.37) circle (3.5pt);
\draw [color=black] (3.95,-.07) node {$2$};
\draw [color=black] (8.563585253254768,-1.556891211079881) circle (3.5pt);
\draw [fill=black] (7.658795156625736,1.4117340227269093) circle (3.5pt);
\draw [fill=black] (8.027270583061497,1.5264319388620566) circle (3.5pt);
\draw [fill=black] (8.404090645055357,1.6437273479119399) circle (3.5pt);
\draw [color=black] (7.468216369200496,3.185260339555014) ++(-3.5pt,0 pt) -- ++(3.5pt,3.5pt)--++(3.5pt,-3.5pt)--++(-3.5pt,-3.5pt)--++(-3.5pt,3.5pt);
\draw [fill=black] (6.336484434722719,3.2724504589125343) ++(-3.5pt,0 pt) -- ++(3.5pt,3.5pt)--++(3.5pt,-3.5pt)--++(-3.5pt,-3.5pt)--++(-3.5pt,3.5pt);
\end{scriptsize}
\end{tikzpicture}
\caption{The scheme $Z_1$ with $H_{Z_1} = H$}   \label{step2}
\end{minipage}
}
\end{figure}

Moving to STEP  $2$, we set ${\bf h_2} = ((4-1)_+,(2-1)_+,(1-1)_+) =(3,1,0)$
and $s_2 = 1$.  (Note the $j$-th entry of ${\bf h_2}$ is a lower bound on
the number of reduced points on $\ell_{j+1}$ for $j=2,3,4$.)
Because $s_2 \neq 0$, we let
$t_2 = \min\{\lfloor \frac{3}{2} \rfloor, 1\} = 1$.   We remove two points
from $\ell_2$ and one point from $\ell_3$ from $Z_1$,
but add the double point $2P_{2,3}$ to form the scheme $Z_2$.
Again, Theorem \ref{procedure} implies that this construction
of double and reduced points has the same Hilbert function as $Z_1$,
and consequently,
$Z$.  See Figure \ref{step3} for an illustration of $Z_2$.


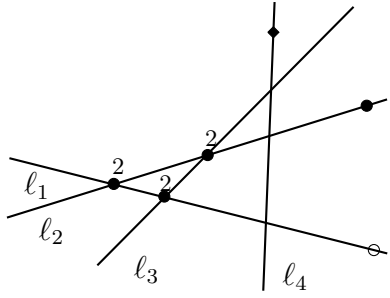
\begin{figure}[!htbp]
\centering
\mbox{%
\begin{minipage}{0.50\textwidth}
\begin{tikzpicture}[scale=0.6]
\clip(-0.7,-2.5) rectangle (10,4);
\draw [line width=0.8pt] (0.43356525262598167,-0.837313686104764)-- (8.854928961644573,1.7840629353535518);
\draw [line width=0.8pt] (2.435895714524843,-1.8956076501016481)-- (8.113055412043682,3.8363202135355516);
\draw [line width=0.8pt] (6.112048993437138,-2.45813447524598)-- (6.362749237717014,3.943078428700201);
\draw [line width=0.8pt] (0.4876183843762104,0.48042166509819506)-- (8.859487147904208,-1.6315379685322804);
\draw (0.5743549512051723,0.4195151579018995) node[anchor=north west] {$\ell_1$};
\draw (0.8978126313662312,-0.5949657480577836) node[anchor=north west] {$\ell_2$};
\draw (3.0002875524131136,-1.4477178139369373) node[anchor=north west] {$\ell_3$};
\draw (6.293674841325713,-1.6094466540174666) node[anchor=north west] {$\ell_4$};
\begin{scriptsize}
\draw [fill=black] (2.8,-0.09) circle (3.5pt);
\draw[color=black] (2.9,.3) node {$2$};
\draw [fill=black] (3.92,-.37) circle (3.5pt);
\draw [color=black] (3.95,-.07) node {$2$};
\draw [fill=black] (4.88,0.55) circle (3.5pt);
\draw [color=black] (4.96,0.92) node {$2$};
\draw [color=black] (8.563585253254768,-1.556891211079881) circle (3.5pt);
\draw [fill=black] (8.404090645055357,1.6437273479119399) circle (3.5pt);
\draw [fill=black] (6.336484434722719,3.2724504589125343) ++(-3.5pt,0 pt) -- ++(3.5pt,3.5pt)--++(3.5pt,-3.5pt)--++(-3.5pt,-3.5pt)--++(-3.5pt,3.5pt);
\end{scriptsize}
\end{tikzpicture}
\caption{The scheme $Z_2$ with $H_{Z_2} = H$} \label{step3}
\end{minipage}
}
\end{figure}
Finally, at STEP $3$ we have ${\bf h_3} = ((2-2)_+,(1-2)_+) = (0,0)$ and
$s_3 =0$. Our construction terminates and returns the scheme $Z_2$, which
consists of three double points and three reduced
points, and $\Delta H_{Z_2} = \Delta H = (1,2,3,4,2)$.
\end{example}

\begin{example}
Construction \ref{algorithm} may not produce a set of just double points,
even if $\Delta H$ is known to be the valid Hilbert function
of a set of double points.  For example,
consider the scheme $Z$ of two double points. We have $\Delta H_Z=(1,
2, 2, 1)$.  So, $\Delta H = (1,2,2,1)$ is a valid Hilbert function of a
set of double points.  However, Construction \ref{algorithm}
cannot be used to detect this fact.
In particular,  since $\Delta H^\star =(4,2)$
Construction \ref{algorithm} returns only one double
point (and two simple points on $\ell_1$, and an other simple point
on $\ell_2$).
\end{example}

\begin{example}
Consider the valid Hilbert function
$\Delta H = (\underbrace{1,\ldots,1}_{\sigma+1})$.
Construction \ref{algorithm} applied to this $\Delta H$
stops at the beginning of
STEP $1$ by producing $\sigma+1$
reduced points on a line $\ell_1$.  A valid Hilbert function of this type is the
only time Construction \ref{algorithm}
terminates at STEP 1.  Note that it can be shown that if
 $\Delta H = (1,\ldots,1)$, then the only zero-dimensional scheme $Z$
with $\Delta H_Z = \Delta H$ is precisely a set of reduced points on a line.
If $\Delta H = (1,2,\ldots)$, then Construction \ref{algorithm} will produce
a scheme $Z$ with at least one double point.
\end{example}

As noted in the last remark, if $\Delta H = (1,2,\ldots)$, then our procedure
produces a scheme with at least one double point.
We are actually interested in producing a scheme
with the largest possible number of double points.  We can determine
the number of double points produced by Construction \ref{algorithm}
directly from $\Delta H$, as shown in the next result
which gives an answer to Question \ref{weakerq}.

\begin{theorem}\label{lowerbounds}
Let $\Delta H$ be a valid Hilbert function with $\Delta H^{\star}
= (h_1^\star,\ldots,h_\alpha^\star)$.
Set
\[d = \sum_{i=1}^{\alpha-1} \min\left\{
\left\lfloor \frac{(h_i^\star-(i-1))_+}{2} \right\rfloor,
\#\{k ~|~ i+1 \leq k \leq \alpha ~\mbox{and}~~ h^\star_k \geq i \}
\right\}.
\]
 Then there is a set
of fat points $Z$ of $d$ double points and $r = \left(\sum \Delta H \right) - 3d$
reduced points with $\Delta H_Z = \Delta H$.
\end{theorem}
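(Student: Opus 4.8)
The plan is to run Construction \ref{algorithm} on the input $\Delta H$ and simply read the two counts off the output. All of the substantive content has already been established: Theorem \ref{procedure} shows that each individual ``merge'' preserves the Hilbert function, and the correctness argument accompanying Construction \ref{algorithm} shows that the terminal scheme $Z$ satisfies $H_Z = H$, equivalently $\Delta H_Z = \Delta H$. So what remains is purely bookkeeping: count the double points and the reduced points of $Z$ and match them to the stated $d$ and $r$.

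First I would count the double points. By inspection of the construction, STEP $n$ (for $n \ge 1$) inserts exactly $t_n$ double points, namely $2P_{n,n+j}$ for $j = 1,\ldots,t_n$, where $t_n = \min\{\lfloor (h_n^\star-(n-1))_+/2\rfloor,\, s_n\}$ and $s_n = \#\{k \mid n+1 \le k \le \alpha,\ h_k^\star \ge n\}$. Since the construction halts at or before STEP $\alpha$ and $s_\alpha = 0$ forces $t_\alpha = 0$, the total number of inserted double points is
\[
\sum_{n=1}^{\alpha-1} t_n = \sum_{n=1}^{\alpha-1} \min\left\{\left\lfloor \tfrac{(h_n^\star-(n-1))_+}{2}\right\rfloor,\ \#\{k \mid n+1 \le k \le \alpha,\ h_k^\star \ge n\}\right\} = d,
\]
exactly the quantity in the statement. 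One should also check that this equality persists even if the procedure terminates at some STEP $m < \alpha$: in that case $s_m = 0$ forces $h_k^\star < m$ for all $k > m$, which makes $t_i = 0$ for every $i \ge m$, so the ``missing'' terms of the sum vanish and the count is unaffected.

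Next I would determine the number of reduced points by a degree argument rather than by tracking the construction step by step. Because $Z$ is a zero-dimensional scheme with $\Delta H_Z = \Delta H$ and $H_Z(i) = \deg(Z)$ for $i \gg 0$, summing the first difference gives $\deg(Z) = \sum_i \Delta H_Z(i) = \sum \Delta H$. On the other hand, a double point of $\mathbb{P}^2$ contributes $\binom{2+1}{2} = 3$ to the degree and a reduced point contributes $1$, so $\deg(Z) = 3d + r$, where $r$ is the number of reduced points. Equating the two expressions gives $r = (\sum \Delta H) - 3d$, as claimed; in particular $r \ge 0$ automatically, since it is a genuine count of points.

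I expect essentially no obstacle beyond careful bookkeeping, precisely because the geometry and the invariance of the Hilbert function were already absorbed into Theorem \ref{procedure} and the correctness of Construction \ref{algorithm}. The one point that deserves a moment's care is confirming that the double points are counted \emph{without duplication}: the intersection points $P_{i,j}$ are pairwise distinct since no three of the $\ell_i$ are concurrent, hypothesis $(e)$ of Theorem \ref{procedure} guarantees that each $2P_{n,n+j}$ being inserted is not already a double point, and any double point created at a later STEP $m > n$ lies on $\ell_m$ with all indices $\ge m > n$, so it can never coincide with $P_{n,n+j}$. Hence the double points produced across all steps are genuinely distinct and of multiplicity exactly two, and their total is exactly $d$.
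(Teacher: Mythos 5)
Your proposal is correct and follows essentially the same route as the paper: both run Construction \ref{algorithm}, identify the number $t_i$ of double points inserted at STEP $i$ with the $i$-th term of the sum defining $d$, and observe that once the procedure terminates the remaining terms vanish (the paper phrases this via $j = \max\{i \mid s_i \neq 0\}$ and the monotonicity $s_1 \geq s_2 \geq \cdots$, you via $s_m = 0$ forcing $t_i = 0$ for $i \geq m$), with $r = (\sum \Delta H) - 3d$ then forced by the degree count. Your explicit verifications of the degree bookkeeping and of the non-duplication of the points $P_{i,j}$ are details the paper leaves implicit, but they introduce no new idea.
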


\begin{proof}
We first note that $s_i$ in Construction \ref{algorithm}
equals $\#\{k ~|~ i+1 \leq k \leq \alpha ~\mbox{and}~~ h^\star_k \geq i \}$
Also, we have $s_1 \geq s_2 \geq \cdots \geq s_{\alpha-1} \geq s_{\alpha} = 0$.
Let $j = \max\{i ~|~ s_i \neq 0\}$.  So, for $i=1,\ldots,j$,
\[ \min\left\{
\left\lfloor \frac{(h_i^\star-(i-1))_+}{2} \right\rfloor,
\#\{k ~|~ i+1 \leq k \leq \alpha ~\mbox{and}~~ h^\star_k \geq i \}
\right\}.\]
is the number of double points added to $Z_{i-1}$ at STEP $i$.
For $i=j+1,\ldots,\alpha$,  Construction \ref{algorithm} has
terminated, and no new double points are added.  This
fact is captured in the summation via the fact that
all the summands
\[\min\left\{
\left\lfloor \frac{(h_i^\star-(i-1))_+}{2} \right\rfloor,
\#\{k ~|~ i+1 \leq k \leq \alpha ~\mbox{and}~~ h^\star_k \geq i \}
\right\}=0\]
for $i=j+1,\ldots,\alpha$.
\end{proof}

We record some corollaries:

\begin{corollary}\label{lowerboundcor}Let $\Delta H = (1,2,\ldots,\alpha,h_{\alpha},\ldots,h_{\sigma})$
be a valid Hilbert function, and let $d$ be as in Theorem \ref{lowerbounds}.
\begin{enumerate}
\item[$(i)$] The integer $d$ satisfies
$\min\left\{\left\lfloor\frac{\sigma+1}{2}\right\rfloor,
\alpha-1 \right\}\leq d \leq \binom{\alpha}{2}$.
\item[$(ii)$] For every integer $1 \leq e \leq d$, there exists a scheme $Z$
of $e$ double points and $(\sum \Delta H) - 3e$ reduced points with
$H_Z = H$.
\item[$(iii)$]  Let $e = \min\left\{\lfloor \frac{\sigma+1}{2} \rfloor,
\alpha-1 \right\}$.  Then there exists a scheme $Z$ with $e$ double
points and  $(\sum \Delta H) - 3e$ reduced points.
\end{enumerate}
\end{corollary}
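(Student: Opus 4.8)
The plan is to read all three statements off the explicit formula for $d$ in Theorem \ref{lowerbounds}, together with the fact that the scheme produced by Construction \ref{algorithm} is assembled one double point at a time, each addition being a single instance of the merge in Theorem \ref{procedure}. Before starting, I would record two facts about the conjugate $\Delta H^\star = (h_1^\star,\ldots,h_\alpha^\star)$ that are already available: its entries are strictly decreasing and positive, so $h_k^\star \geq 1$ for every $1 \leq k \leq \alpha$, and $h_1^\star = \sigma+1$.

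For part $(i)$, I would treat the two bounds separately. For the upper bound, note that each summand of $d$ is at most its second argument $s_i = \#\{k \mid i+1 \leq k \leq \alpha,\ h_k^\star \geq i\}$, and trivially $s_i \leq \alpha - i$; hence $d \leq \sum_{i=1}^{\alpha-1}(\alpha-i) = \binom{\alpha}{2}$. For the lower bound, I would isolate the $i=1$ term of the sum: since $h_1^\star = \sigma+1$ and $h_k^\star \geq 1$ for all $k$, the condition $h_k^\star \geq 1$ holds for every $2 \leq k \leq \alpha$, so that term equals $\min\{\lfloor(\sigma+1)/2\rfloor,\ \alpha-1\}$. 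As every summand is non-negative, $d$ is at least this first term, which is exactly $\min\{\lfloor(\sigma+1)/2\rfloor,\ \alpha-1\}$.

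For part $(ii)$, the key observation is that within STEP $n$ of Construction \ref{algorithm} the $t_n$ double points $2P_{n,n+1},\ldots,2P_{n,n+t_n}$ are introduced one at a time, and each introduction is a single application of Theorem \ref{procedure}; consequently every intermediate scheme has Hilbert function $H$. Viewing the whole construction as a sequence of $d = \sum_n t_n$ such merges applied to the reduced scheme $Z_0$ of Theorem \ref{kconfig}, I would simply halt after the $e$-th merge for any prescribed $1 \leq e \leq d$. The resulting scheme $Z$ satisfies $H_Z = H_{Z_0} = H$ by Theorem \ref{procedure}, has exactly $e$ double points, and — since each merge preserves $\deg Z = \sum \Delta H$ while replacing three reduced points by one point of degree $3$ — carries precisely $(\sum \Delta H) - 3e$ reduced points.

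Part $(iii)$ is then immediate. Set $e = \min\{\lfloor(\sigma+1)/2\rfloor,\ \alpha-1\}$. If $\alpha \geq 2$, then $\sigma \geq \alpha-1 \geq 1$ forces $e \geq 1$, while part $(i)$ gives $e \leq d$, so part $(ii)$ supplies the desired scheme. The only boundary case is $\alpha = 1$, where $e = 0$, $\Delta H = (1,\ldots,1)$, and the claim is met by $Z = Z_0$ itself. I expect no essential difficulty in any of this; the single point needing care is the bookkeeping in part $(ii)$, namely making explicit that the $t_n$ double points added during one STEP of Construction \ref{algorithm} genuinely arise from $t_n$ \emph{separate} applications of Theorem \ref{procedure}, so that every intermediate count $e$ is realized by an honest scheme of double and reduced points with the correct Hilbert function.
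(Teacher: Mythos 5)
Your proof is correct and follows essentially the same route as the paper's: the lower bound in $(i)$ is the $i=1$ summand of the formula for $d$, part $(ii)$ comes from halting Construction \ref{algorithm} after $e$ single applications of Theorem \ref{procedure}, and $(iii)$ follows since $e \leq d$. The only cosmetic differences are that you bound $d \leq \binom{\alpha}{2}$ arithmetically via $s_i \leq \alpha - i$ where the paper counts the intersection points $\ell_i \cap \ell_j$ geometrically, and you explicitly dispose of the boundary case $\alpha = 1$, $e = 0$, which the paper leaves implicit.
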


\begin{proof}
For $(i)$, the support of each double point produced by Construction \ref{algorithm} has the form $\ell_i \cap \ell_j$.
Since there are only $\alpha$ lines $\ell_1,\ldots,\ell_\alpha$
used in this construction, the outputted scheme can
have at most $\binom{\alpha}{2}$ double points, thus
giving the upper bound.  For the lower bound, note
that in the computation of $d$ using Theorem \ref{lowerbounds},
when the index is $i=1$, we have  $h_1^\star = \sigma+1$ and
$\alpha-1 = \#\{2 \leq k \leq \alpha ~\mbox{and}~ h_k^\star \geq 1\}$, i.e.,   the lower bound is the first term
in the sum.

The proof of $(iii)$ will follow from $(i)$ and $(ii)$
since $e \leq d$.
For $(ii)$, notice that Construction \ref{algorithm} adds one double point at a time,
and when it finishes, we have $d$ double points.  Since $e \leq d$,
we use this procedure again, but changing our stopping criterion
so the procedure terminates when we have $e$ double points.
\end{proof}

Recall that one of the fundamental problems about Hilbert functions of double
points in $\mathbb{P}^2$ is to classify what functions are the Hilbert
functions of double (or more generally, fat) points.  We can now
contribute to this problem by identifying some new functions as the
Hilbert functions of double points.

\begin{theorem}\label{HFofdoublepts}
Let $\Delta H$ be a valid Hilbert function, and let $\Delta H^\star =
(h_1^\star,\ldots,h_\alpha^\star)$.
Suppose that
\[\frac{(h_i^\star-(i-1))_+}{2} =
\#\{k ~|~ i+1 \leq k \leq \alpha ~\mbox{and~ $h_k^\star \geq i$}\}
~~\mbox{for $i = 1,\ldots \alpha-1$}.\]
Then $\Delta H$ is Hilbert function of a set of double points in
$\mathbb{P}^2$.
\end{theorem}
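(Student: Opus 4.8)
The plan is to feed $\Delta H$ into Construction \ref{algorithm} and show that, under the stated hypothesis, the scheme it returns contains no reduced points at all. By the verification of Construction \ref{algorithm} and by Theorem \ref{lowerbounds}, that scheme $Z$ satisfies $H_Z=H$ and consists of exactly $d$ double points and $r=\left(\sum\Delta H\right)-3d$ reduced points, where $d$ is the sum appearing in Theorem \ref{lowerbounds}. Since every double point is created by deleting three reduced points (two on $\ell_n$ and one on some $\ell_j$) and no double point is ever deleted, it suffices to prove the single numerical identity $3d=\sum\Delta H$: this forces $r=0$ and exhibits $\Delta H$ as the Hilbert function of a scheme of double points only.

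First I would simplify $d$. Setting $s_i=\#\{k\mid i+1\le k\le\alpha,\ h_k^\star\ge i\}$, the hypothesis reads $(h_i^\star-(i-1))_+=2s_i$ for $i=1,\dots,\alpha-1$; in particular each $(h_i^\star-(i-1))_+$ is even, so the floor in Theorem \ref{lowerbounds} is exact and the two entries of each minimum agree. Hence $d=\sum_{i=1}^{\alpha-1}s_i$ and also $2d=\sum_{i=1}^{\alpha-1}(h_i^\star-(i-1))_+$. Interchanging the order of summation (and using $h_k^\star\ge 1$) gives $\sum_{i=1}^{\alpha-1}s_i=\sum_{i=2}^{\alpha}\min(i-1,h_i^\star)$. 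I then compute $3d=2d+d$ and compare it with $\sum\Delta H=\sum_{i=1}^{\alpha}h_i^\star$, the last equality being the fact that a partition and its conjugate have equal size. Applying the elementary identity $(x-(i-1))_+ + \min(i-1,x)=x$ to each index $2\le i\le\alpha-1$ collapses all interior terms to $h_i^\star$, and what survives is $3d=\sum_{i=1}^{\alpha-1}h_i^\star+\min(\alpha-1,h_\alpha^\star)$.

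The hard part is the single boundary term: the identity $3d=\sum\Delta H$ now holds if and only if $\min(\alpha-1,h_\alpha^\star)=h_\alpha^\star$, that is, $h_\alpha^\star\le\alpha-1$. This is where I expect the hypothesis to be used beyond mere parity, together with the strict inequalities $h_1^\star>\cdots>h_\alpha^\star$ established in the proof of Theorem \ref{kconfig}. Evaluating the hypothesis at $i=\alpha-1$ yields $(h_{\alpha-1}^\star-(\alpha-2))_+=2$ when $h_\alpha^\star\ge\alpha-1$ and $=0$ otherwise; a short case split then gives $h_\alpha^\star\le\alpha-1$ in either case (when $h_\alpha^\star\ge\alpha-1$, the strict decrease forces $h_{\alpha-1}^\star=\alpha$ and hence $h_\alpha^\star=\alpha-1$). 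With the boundary term resolved, $3d=\sum_{i=1}^{\alpha}h_i^\star=\sum\Delta H$, so $r=0$ and $Z$ is the desired set of double points. Note that this argument uses $\alpha\ge 2$; the degenerate case $\alpha=1$, i.e. $\Delta H=(1,\dots,1)$, satisfies the hypothesis vacuously but must be set aside, since there the boundary term genuinely fails.
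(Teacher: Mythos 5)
Your proof is correct, but it takes a genuinely different route from the paper's. The paper argues \emph{dynamically}: it strengthens the induction used to verify Construction \ref{algorithm}, showing that under the hypothesis the tuple ${\bf h_n}$ counts \emph{exactly} (not merely at least) the reduced points remaining on each line, so that $t_n = s_n$ at every step, the line $\ell_n$ is emptied of reduced points at STEP $n$, and at termination ${\bf h_n}=(0,\ldots,0)$, i.e., no reduced points survive. You instead argue \emph{statically}: taking the count $d$ of Theorem \ref{lowerbounds} as given, you reduce everything to the single numerical identity $3d=\sum\Delta H$, which you establish by noting the hypothesis makes each minimum equal to $s_i$ (so $2d=\sum_{i=1}^{\alpha-1}(h_i^\star-(i-1))_+$), double-counting to get $d=\sum_{i=2}^{\alpha}\min(i-1,h_i^\star)$, collapsing via $(x-(i-1))_+ +\min(i-1,x)=x$ and the fact that a partition and its conjugate have the same size, and resolving the boundary term $\min(\alpha-1,h_\alpha^\star)=h_\alpha^\star$ through a case split at $i=\alpha-1$ using the strict decrease $h_1^\star>\cdots>h_\alpha^\star$; all of these steps check out. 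Each approach buys something: the paper's induction additionally identifies \emph{where} the double points sit (at the intersections $\ell_i\cap\ell_j$), which is what makes Corollary \ref{onlydoubleptsspecial} and the star-configuration analysis of Section \ref{specialconfigsection} possible, whereas your counting argument avoids the somewhat delicate ``exact versus lower-bound'' bookkeeping of the algorithm's state and, as a bonus, correctly isolates the degenerate case $\alpha=1$ (where the hypothesis is vacuous, the boundary term fails, and indeed $\Delta H=(1,\ldots,1)$ is never the Hilbert function of double points) --- an edge case that the paper's statement and proof silently pass over by implicitly invoking the hypothesis at $i=1$.
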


\begin{proof}
We prove this by showing that Construction \ref{algorithm} terminates with
a scheme of only double points.
At STEP 1, observe that the vector
${\bf h}_1 = (h_1^\star,\ldots,h_\alpha^\star)$ has the property that
$h_i^\star$ is the exact number of reduced points on $\ell_i$.
The hypotheses imply that when Construction \ref{algorithm} executes STEP $1$,
the value of $t_1$ is given by
\[t_1 =   \frac{(h_1^\star)_+}{2} = s_1.\]
But this means that all the reduced points on $\ell_1$ and one
reduced point on each of $\ell_{1+1},\ldots,\ell_{1+s_1 = \alpha}$ are removed to
form $2t_1$ double points.  In particular, no reduced points are left on $\ell_1$
and  ${\bf h}_2 = ((h_2^\star-1)_+,\ldots,(h_\alpha^\star-1)_+)$   has the property that
$(h_i-1)_+^\star$ is exactly the number of reduced points remaining on $\ell_i$.
(In the general, case, $(h_i^\star-1)_+$ is only a lower bound for the number
of reduced points that remain).

Proceeding by induction on $n$, note that at STEP $n$, the tuple  ${\bf h_n} =
((h_n^\star-(n-1))_+,\ldots,(h_\alpha^\star-(n-1))_+)$ counts exactly the
number of reduced points on $\ell_i$ for $i=n,\ldots,\alpha$ after
constructing $Z_{n-1}$.   This is because in each of STEP $1$ through $n-1$,
we used at exactly one reduced point on $\ell_i$ when constructing $Z_j$
with $0 \leq j \leq n-1$.
The hypotheses imply when Construction \ref{algorithm} executes STEP $n$ we
get
\[t_n =  \frac{(h_n^\star-(n-1))_+}{2}  = s_n.\]
So,  all the remaining reduced points on $\ell_n$ and one
reduced point on each of $\ell_{n+1},\ldots,\ell_{n+s_n}$ are removed to
form $2t_n$ double points.  In particular, no reduced points are left on $\ell_n$
and  ${\bf h}_{n+1} = ((h_{n+1}^\star-n)_+,\ldots,(h_\alpha^\star-(n))_+)$
has the property that $(h_i-n)_+^\star$ is the exactly the number of
reduced points remaining on $\ell_i$.

When the algorithm terminates, ${\bf h_n} = (0,\ldots,0)$, that is, no reduced
points remain.
\end{proof}

\begin{example}
Consider the valid Hilbert function
\[\Delta H = (1,2,3,4,5,6,2,2,1,1) \Leftrightarrow \Delta H^\star= (10,7,4,3,2,1).\]
Then $\Delta H^\star$ statisfies the hypothesis of Theorem \ref{HFofdoublepts}
since
\begin{eqnarray*}
\frac{h_1^\star}{2} & = & 5 =  \#\{k ~|~ 2 \leq k \leq 6 ~\mbox{and~ $h_k^\star
\geq 1$}\}\\
\frac{(h_2^\star-1)_+}{2} & = & 3 =  \#\{k ~|~ 3 \leq k \leq 6 ~\mbox{and~ $h_k^\star \geq 2$}\}\\
\frac{(h_3^\star-2)_+}{2} & = & 1 =  \#\{k ~|~ 4 \leq k \leq 6 ~\mbox{and~ $h_k^\star \geq 3$}\}\\
\frac{(h_i^\star-(i-1))_+}{2} & = & 0 =  \#\{k ~|~ i+1 \leq k \leq 6 ~\mbox{and~ $h_k^\star \geq i$}\} ~~\mbox{for $i \geq 4$.}\\
\end{eqnarray*}
Then $\Delta H$ is the Hilbert function of $(\sum \Delta H)/3 = 9$ double points.
Indeed, if $\ell_1,\ldots,\ell_6$ are six general lines such that no three meet
at a point, Construction \ref{algorithm} will produce the scheme of nine
double points
\[Z = 2P_{1,2} + 2P_{1,3} + 2P_{1,4} + 2P_{1,5} + 2P_{1,6} + 2P_{2,3} + 2P_{2,4} +
2P_{2,5} + 2P_{3,4}\]
where $P_{i,j} = \ell_i \cap \ell_j$ with Hilbert function $H_Z = H$.
\end{example}

The following corollary is used in the next section.
\begin{corollary}\label{onlydoubleptsspecial}
Let $\Delta H$ be a valid Hilbert function with $\Delta
H^\star = (h_1^\star,\ldots,h_\alpha^\star)$. If
 \[\frac{h_i^\star-(i-1)}{2} = \alpha- i~~\mbox{for $i = 1,\ldots \alpha$},\]
then there exists a scheme $Z$
in $\mathbb{P}^2$ of only double points with $\Delta H=\Delta H_Z$.
\end{corollary}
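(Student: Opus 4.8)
The plan is to derive this as a direct consequence of Theorem \ref{HFofdoublepts}, by showing that the closed-form hypothesis of the corollary implies the counting hypothesis appearing in that theorem. First I would rewrite the assumption in explicit form: solving $\frac{h_i^\star-(i-1)}{2}=\alpha-i$ for each $i=1,\ldots,\alpha$ gives
\[h_i^\star = 2(\alpha-i)+(i-1) = 2\alpha - i - 1.\]
In particular $h_i^\star-(i-1)=2(\alpha-i)\geq 0$ for all $i\leq\alpha$, so the positive-part truncation in Theorem \ref{HFofdoublepts} does nothing, i.e. $(h_i^\star-(i-1))_+ = h_i^\star-(i-1)$. (The $i=\alpha$ case of the hypothesis is not needed by Theorem \ref{HFofdoublepts}, which only requires $i=1,\ldots,\alpha-1$; it simply pins down $h_\alpha^\star=\alpha-1$ so that $\Delta H^\star$ is completely specified.)

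Next I would compute the right-hand side of the condition in Theorem \ref{HFofdoublepts}. Using $h_k^\star = 2\alpha - k - 1$, the inequality $h_k^\star\geq i$ is equivalent to $k\leq 2\alpha-1-i$. The key arithmetic observation is that for $1\leq i\leq \alpha-1$ one has $2\alpha-1-i\geq\alpha$, so on the range $i+1\leq k\leq\alpha$ the constraint $k\leq 2\alpha-1-i$ is automatically satisfied and imposes nothing. Hence
\[\#\{k ~|~ i+1\leq k\leq\alpha \text{ and } h_k^\star\geq i\} = \#\{k ~|~ i+1\leq k\leq\alpha\} = \alpha-i.\]
Combining the two computations, for every $i=1,\ldots,\alpha-1$,
\[\frac{(h_i^\star-(i-1))_+}{2} = \frac{h_i^\star-(i-1)}{2} = \alpha-i = \#\{k ~|~ i+1\leq k\leq\alpha \text{ and } h_k^\star\geq i\},\]
which is exactly the hypothesis of Theorem \ref{HFofdoublepts}. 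Applying that theorem produces a scheme $Z$ of only double points in $\mathbb{P}^2$ with $\Delta H_Z = \Delta H$, as claimed.

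The entire argument is a translation of hypotheses, so there is no serious obstacle; the only genuine content is the bookkeeping in the second step, and it collapses to the single inequality $2\alpha-1-i\geq\alpha \iff i\leq\alpha-1$, guaranteeing that no term is dropped from the counting set. The one point I would double-check before invoking Theorem \ref{HFofdoublepts} is that $\Delta H^\star = (2\alpha-2,\,2\alpha-3,\,\ldots,\,\alpha-1)$ is an honest conjugate partition, namely strictly decreasing with positive entries; this holds for $\alpha\geq 2$, while for $\alpha=1$ the hypothesis forces $h_1^\star=0$ and the statement is degenerate.
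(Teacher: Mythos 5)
Your proof is correct and follows the same route as the paper: the paper's proof simply solves the hypothesis to get $\Delta H^\star = (2\alpha-2,\ldots,\alpha,\alpha-1)$ and invokes Theorem \ref{HFofdoublepts}, leaving the verification implicit. You do exactly this, merely spelling out the bookkeeping (that $h_k^\star \geq i$ holds automatically for $i+1 \leq k \leq \alpha$ when $i \leq \alpha-1$) that the paper omits.
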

\begin{proof} We have
\[\Delta H^\star=(\underbrace{2\alpha -2,\dots,\alpha+1,\alpha,\alpha-1}_{\alpha}).\]
Now apply Theorem \ref{HFofdoublepts}.
\end{proof}


\section{Special configurations}\label{specialconfigsection}

In this section, we will examine two valid Hilbert functions:
\[\Delta H_1 = (\underbrace{1,2,3,\ldots,t}_{t},\underbrace{t+1,\ldots,t+1}_{t}),
~~\mbox{and}~~~
\Delta H_2 = (\underbrace{1,2,3,\ldots,t}_{t},\underbrace{t+1,\ldots,t+1}_{t},1).\]
In the first case, we will show that   only  the sets $Z$ of $\binom{t}{2}$ double
points with support  on a star configuration  have $\Delta H_Z = \Delta H_1$.
In the second case, we will show that
the only sets $Z$ of $\binom{t}{2}$ double points and one
simple point that have $\Delta H_Z = \Delta H_2$ are  sets of double points with support
on a star configuration plus one
extra point on one of the lines that define the star configuration.

We start by collecting together some required tools.  The first
result we need is the following theorem (see
\cite{BGM} or \cite{D}).

\begin{theorem}\label{davis}
Let $X\subset \mathbb{P}^2$ be a zero-dimensional subscheme, and
assume that there is a $t$ such that $\Delta H_X(t-1) = \Delta H_X(t) = d$.
Then the degree $t$ components of $I_X$ have a GCD, say $F$, of degree $d$.
Furthermore, the subscheme $W$ of $X$ lying on the curve defined by $F$
(i.e., $I_{W}$ is the saturation of the ideal $(I_X, F)$) has
Hilbert function whose
first difference is given by the truncation $\Delta H_{W}(i) =
\min\{\Delta H_X(i), d\}$.
\end{theorem}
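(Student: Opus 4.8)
The plan is to reduce the statement to a question about binary forms by cutting with a general line, prove the GCD claim in that two-variable setting, lift it back to $\mathbb{P}^2$, and finally read off the Hilbert function of $W$ from a liaison sequence. First I would fix a general linear form $L\in R_1$. Since $X$ is zero-dimensional, $L$ is a nonzerodivisor on $R/I_X$, so $I_X:L=I_X$. Writing $S=R/(L)\cong k[x,y]$ and $A=R/(I_X+(L))$, the ring $A$ is Artinian with $H_A(i)=\Delta H_X(i)$, so the hypothesis becomes $H_A(t-1)=H_A(t)=d$. The restriction map $\rho_i\colon (I_X)_i\to S_i$ has image $\bar I_i$, the degree-$i$ piece of the image ideal $\bar I\subseteq S$, and kernel $L\,(I_X)_{i-1}$ (here I use that $L$ is a nonzerodivisor). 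Hence $\dim_k \bar I_i=\dim_k(I_X)_i-\dim_k(I_X)_{i-1}=(i+1)-\Delta H_X(i)$, so in particular $\dim_k\bar I_{t-1}=t-d$ and $\dim_k\bar I_t=t+1-d$.

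The heart of the argument is a growth estimate for subspaces of binary forms. In the two-variable ring $S$, every nonzero $V\subseteq S_m$ satisfies $\dim_k S_1V\ge \dim_k V+1$, with equality if and only if $V=G\cdot S_{m-e}$ consists of all forms of degree $m$ divisible by $G=\gcd(V)$, where $e=\deg G$. I would apply this to $V=\bar I_{t-1}$: since $S_1\bar I_{t-1}\subseteq \bar I_t$, the chain $\dim_k\bar I_{t-1}+1\le \dim_k S_1\bar I_{t-1}\le \dim_k\bar I_t=\dim_k\bar I_{t-1}+1$ forces equality throughout. Thus $\bar I_{t-1}=\bar F\cdot S_{t-1-e}$ with $\bar F=\gcd(\bar I_{t-1})$, and comparing dimensions gives $e=d$; moreover $\bar I_t=S_1\bar I_{t-1}=\bar F\cdot S_{t-d}$, so every degree-$t$ form of $\bar I$ is divisible by a fixed $\bar F$ of degree $d$.

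Next I would lift this divisibility to $R$, and this is the step I expect to be the main obstacle: passing from ``every reduction modulo a general $L$ is divisible by $\bar F$'' to a genuine common factor $F\in R_d$ of $(I_X)_t$ is not formal, since divisibility can in principle improve under specialization. I would argue by genericity of $L$. Taking $F$ to be the GCD of two general members of $(I_X)_t$ in $R$, the previous paragraph shows that for a general $L$ the reduction $\bar F$ has degree exactly $d$, while $\deg F\le \deg\bar F$ can only drop under reduction; combining this with the a priori bound $\dim_k(I_X)_t\le \dim_k F\,R_{t-\deg F}=\binom{t-\deg F+2}{2}$ and the known value of $\dim_k(I_X)_t$ pins down $\deg F=d$ and forces $F\mid (I_X)_t$. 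This proves the first assertion.

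Finally, for the Hilbert function of $W$, I would exploit the liaison exact sequence
\[0\longrightarrow \big(R/(I_X:F)\big)(-d)\xrightarrow{\ \cdot F\ } R/I_X\longrightarrow R/(I_X+(F))\longrightarrow 0,\]
together with $I_W=\operatorname{sat}(I_X+(F))$. Because $W$ is supported on the degree-$d$ curve $\{F=0\}$, one gets $\Delta H_W(i)\le \Delta H_{\{F=0\}}(i)=\min\{i+1,d\}\le d$ for all $i$, while $W\subseteq X$ gives $\Delta H_W(i)\le \Delta H_X(i)$; together these yield $\Delta H_W(i)\le\min\{\Delta H_X(i),d\}$. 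The reverse inequality is the real content: I would extract it by expressing $\Delta H_W$ through the sequence in terms of $\Delta H_X$ and the residual scheme $X'$ defined by $I_X:F$ (the part of $X$ off the curve), where the GCD structure established above forces the contribution of $X'$ to be exactly the excess of $\Delta H_X$ over $d$. Matching these numerics degree by degree gives $\Delta H_W(i)=\min\{\Delta H_X(i),d\}$, and the bulk of the remaining work is this bookkeeping.
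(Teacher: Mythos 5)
The paper itself offers no proof of this statement: it is quoted as known, with references to Bigatti--Geramita--Migliore \cite{BGM} and Davis \cite{D}, so your attempt can only be measured against the classical argument, which your first half indeed follows (Artinian reduction by a general line, the growth lemma $\dim_k S_1V\ge \dim_k V+1$ for binary forms with its equality case, then lifting). Steps 1--3 of your outline are sound. The lifting step, however, does not work as written: the bound $\dim_k(I_X)_t\le\binom{t-\deg F+2}{2}$ presupposes that $F$ divides \emph{all} of $(I_X)_t$, which is exactly what is to be proved, and the ``known value of $\dim_k(I_X)_t$'' is not known --- the hypothesis fixes only the first differences $\Delta H_X(t-1)=\Delta H_X(t)=d$, not $H_X(t)$ itself, so no dimension count of this shape can pin down $\deg F$. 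The correct repair is your own genericity idea carried out properly: set $F=\gcd\bigl((I_X)_t\bigr)$ (this equals the gcd of two general members and divides the whole component by definition), write $(I_X)_t=F\cdot V$ with $\gcd(V)=1$; the base locus of the system $V$ is then finite, a general line $\ell$ misses it, so the restriction of $V$ to $\ell$ is base-point free and has trivial gcd, whence $\gcd(\bar I_t)=\bar F$ and your two-variable computation gives $\deg F=d$.

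The larger gap is the ``furthermore'' clause, which is the substantive half of Davis's theorem and which you defer to unexplained ``bookkeeping''. Closing it requires several nontrivial ingredients that your plan does not supply: (i) $F$ divides $(I_X)_i$ for every $i\le t$, not just $i=t$ (descending induction: if $F\mid xG$, $F\mid yG$ with $\gcd(x,y)=1$ in the UFD $R$, then $F\mid G$); (ii) consequently $(I_{X'})_{j}\cong(I_X)_{j+d}$ for $j\le t-d$, where $I_{X'}=I_X:F$, giving $\Delta H_{X'}(j)=\Delta H_X(j+d)-d$ in that range; (iii) the key vanishing $\Delta H_{X'}(j)=0$ for all $j\ge t-d$, which follows because the image of $(I_{X'})_{t-d}$ in $k[x,y]$ has dimension at least $\dim_k(I_X)_t-\dim_k(I_X)_{t-1}=t-d+1=\dim_k k[x,y]_{t-d}$, so the Artinian reduction of $X'$ dies in degree $t-d$ and stays zero; this yields $\deg X'=\sum_i\max\{\Delta H_X(i)-d,0\}$ and hence $\deg W=\deg X-\deg X'=\sum_i\min\{\Delta H_X(i),d\}$. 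Only then does your upper bound close the argument: $\Delta H_W(i)\le\min\{\Delta H_X(i),d\}$ pointwise (note this bound is correct but not formal --- pointwise domination of first differences under $W\subseteq X$ itself needs the Artinian reduction by a line general for both schemes, since $I_X\subseteq I_W$ passes to the images in $k[x,y]$) together with equality of the total sums forces pointwise equality, which simultaneously settles the saturation discrepancy between $I_X+(F)$ and $I_W$. Without (i)--(iii) the liaison sequence alone does not determine $\Delta H_{X'}$, and the claimed ``matching of numerics degree by degree'' has no engine behind it.
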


Now we recall some well known results about star configurations.
Given any linear form $L \in R$, we let $\ell$
denote the corresponding line in $\pr^2$. Let
$\ell_1,\ldots,\ell_{t+1}$ be a set of $t+1$ distinct lines in
$\pr^2$ that are three-wise linearly independent (general linear
forms).  In other words, no three lines meet at a point.
 A {\it star configuration} of $\binom{t+1}{2}$ points in
$\pr^2$  is formed from all pairwise intersections of the $t+1$
linear forms.

Geramita, Harbourne, and Migliore have computed the Hilbert function
of double points whose support is a star configuration.
Specifically,

\begin{theorem}[{\cite[Theorem 3.2]{GHM}}]\label{GHMthm}
In $\pr^2$, let $t$  be a positive integer, let $X$ be a star
configuration of $\binom{t+1}{2}$ points, and let $Z=2X$ be a set
of double points whose support is $X$. Then
the first difference of the Hilbert function of $Z$ is
\[\Delta H_{Z}(i) = \left\{
\begin{array}{ll}
i+1 & \mbox{if $0 \leq i \leq t$}\\
t+1 & \mbox{if $t+1 \leq i\leq 2t-1$} \\
0 & \mbox{if $i \geq 2t$.}\\
\end{array}
\right.\]
\end{theorem}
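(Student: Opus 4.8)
The plan is to reduce the statement to Theorem~\ref{susan2} by exhibiting an explicit full reduction vector for $Z = 2X$ whose entries strictly decrease. Write the star configuration as $X = \{P_{i,j} = \ell_i \cap \ell_j \mid 1 \le i < j \le t+1\}$, so that $Z = 2X = \sum_{i<j} 2P_{i,j}$ is supported at the $\binom{t+1}{2}$ pairwise intersection points, each with multiplicity two. Since no three of the general lines $\ell_1,\ldots,\ell_{t+1}$ are concurrent, every point $P_{i,j}$ lies on exactly the two lines $\ell_i$ and $\ell_j$, and each line $\ell_k$ carries exactly $t$ of the points, namely $P_{i,k}$ for $i<k$ together with $P_{k,j}$ for $j>k$.

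First I would take the reducing sequence to be $\ell_1,\ell_2,\ldots,\ell_{t+1}$, each line used exactly once. The key observation is that a double point $P_{i,j}$ with $i<j$ drops from multiplicity two to one at step $i$ (when we pass to $Z:\ell_i$) and is then removed at step $j$; since both $i$ and $j$ lie in $\{1,\ldots,t+1\}$, every double point is totally reduced after this single pass, so $Z_{t+1}=\emptyset$ and the sequence totally reduces $Z$ in the sense of Definition~\ref{susan}.

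Next I would compute the reduction vector ${\bf d}=(d_1,\ldots,d_{t+1})$. At step $k$ the points of $Z_{k-1}$ lying on $\ell_k$ are the $k-1$ points $P_{i,k}$ with $i<k$, each already reduced to a simple point at an earlier step and hence contributing $1$, together with the $t+1-k$ points $P_{k,j}$ with $j>k$, still double and contributing $2$. Therefore
\[
d_k = (k-1)\cdot 1 + (t+1-k)\cdot 2 = 2t+1-k,
\]
so that ${\bf d}=(2t,2t-1,\ldots,t)$ is strictly decreasing. Because $d_1>d_2>\cdots>d_{t+1}$, Theorem~\ref{susan2} applies (with $n=t+1$ and $d_{i+1}=2t-i$) and yields
\[
H_Z(m) = \sum_{i=0}^{t} \binom{\min\{m-i+1,\,2t-i\}}{1}.
\]

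Finally I would extract the first difference. Reading each summand as $\max\{\min\{m-i+1,\,2t-i\},0\}$, one checks that it increases by exactly $1$ as $m$ runs from $i$ to $2t-1$ and is constant elsewhere, so its contribution to $\Delta H_Z(m)=H_Z(m)-H_Z(m-1)$ equals $1$ precisely when $i\le m\le 2t-1$ and $0$ otherwise. Counting the indices $0\le i\le t$ with $i\le m\le 2t-1$ then gives $\Delta H_Z(m)=\min\{m,t\}+1$ for $m\le 2t-1$ and $\Delta H_Z(m)=0$ for $m\ge 2t$, which is exactly the claimed piecewise formula. The only delicate points are confirming the strict monotonicity of ${\bf d}$ (required to invoke Theorem~\ref{susan2}) and respecting the convention $\binom{x}{1}=\max\{x,0\}$ in the difference computation; neither is a genuine obstacle, so the real work is the bookkeeping of which points on each $\ell_k$ remain double at the moment it is used.
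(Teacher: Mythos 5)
Your proof is correct, but it does not follow the paper, for the simple reason that the paper contains no proof of Theorem \ref{GHMthm}: the statement is quoted directly from \cite[Theorem 3.2]{GHM}. Your alternative derivation is sound. Reducing $Z=2X$ by the sequence $\ell_1,\ldots,\ell_{t+1}$ totally reduces $Z$ since each $P_{i,j}$ lies on exactly two of the lines; because no three lines are concurrent, each $P_{i,k}$ with $i<k$ has been reduced exactly once before step $k$ and not since, so your count $d_k=(k-1)\cdot 1+(t+1-k)\cdot 2=2t+1-k$ is right, the vector $(2t,2t-1,\ldots,t)$ is strictly decreasing as Theorem \ref{susan2} requires, and your differencing of $\sum_{i=0}^{t}\max\bigl\{\min\{m-i+1,\,2t-i\},0\bigr\}$ gives $\Delta H_Z(m)=\min\{m,t\}+1$ for $m\le 2t-1$ and $0$ thereafter, which matches the statement (a reassuring check: $\sum_k d_k = \frac{3t(t+1)}{2} = \deg Z$). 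Comparing the two routes: Geramita--Harbourne--Migliore argue algebraically about symbolic powers of star-configuration ideals, and their result holds in $\mathbb{P}^n$ (whence the paper's remark that Theorem \ref{GHMthm} generalizes), whereas your argument is intrinsically two-dimensional; what it buys is self-containedness, since it uses only the Cooper--Harbourne--Teitler machinery (Definition \ref{susan}, Theorem \ref{susan2}) that the paper has already set up for Theorem \ref{procedure}, so the paper could in principle have proved this theorem with its own tools rather than citing it.
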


\begin{remark} We point out that the above result can be generalized to $\pr^n$.
\end{remark}


\subsection{Double points on a star configuration}

We consider the valid Hilbert function
\[\Delta H = (\underbrace{1,2,3,\ldots,t}_{t},\underbrace{t+1,\ldots,t+1}_{t}).\]
By Corollary \ref{onlydoubleptsspecial} with $t= \alpha-1$, we already know in this case that
Construction \ref{algorithm}
gives a set of only double points. In this subsection, we will prove that
the configuration produced by our construction is a set of double points
lying on star configuration, and furthermore,
this is the only set of double points whose first difference
is equal to $\Delta H$. Since for $t\leq 2$ all is trivial, we will assume that  $t \geq 3$.

\begin{theorem}\label{doublestar}
Let $t\geq 3$ be an integer, and let $Z \subset \pr^2$ be a set of
$\binom{t+1}{2}$ double points. Then the sequence
\begin{equation}\label{HFstar2}
\Delta H = (\underbrace{1,2,3,\ldots,t}_{t},\underbrace{t+1,\ldots,t+1}_{t})
\end{equation} is the first difference of the Hilbert
function of $Z$ if and only if $Z$ is a set of double points whose
support is a star configuration.
\end{theorem}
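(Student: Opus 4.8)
The \emph{if} direction is immediate from Theorem~\ref{GHMthm}: if $X = \mathrm{Supp}(Z)$ is a star configuration of $\binom{t+1}{2}$ points and $Z = 2X$, that theorem gives $\Delta H_Z(i) = i+1$ for $0 \le i \le t$ and $\Delta H_Z(i) = t+1$ for $t+1 \le i \le 2t-1$; since the first branch already supplies the value $t+1$ at $i = t$, this list is exactly \eqref{HFstar2}. So the real content is the \emph{only if} direction, which I would prove by extracting from the Hilbert function a plane curve whose degree and singularities force it to be a union of $t+1$ general lines. Write $Z = 2X$ with $|X| = \binom{t+1}{2}$ and assume $\Delta H_Z = \Delta H$.

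The sequence $\Delta H$ takes the repeated value $t+1$ at the indices $t$ and $t+1$ (using $t \ge 3$, so the plateau has length at least two). Applying Theorem~\ref{davis} with $d = t+1$ produces a form $F$ of degree $t+1$ that is the GCD of $(I_Z)_{t+1}$. Since $\Delta H_Z(i) \le t+1$ for every $i$, the truncation in Theorem~\ref{davis} reads $\Delta H_W = \min\{\Delta H_Z, t+1\} = \Delta H_Z$ for the subscheme $W \subseteq Z$ lying on $F$, so $W = Z$; that is, all of $Z$ lies on $F$, and hence $F$ has multiplicity at least two at each of the $\binom{t+1}{2}$ points of $X$. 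A dimension count from $\sum_{i \le t+1}\Delta H_Z(i)$ even gives $\dim_k (I_Z)_{t+1} = 1$, so $F$ is unique up to scalar.

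Before counting singular points I would record that no line meets $X$ in more than $t$ points. For a line $\ell$ the scheme $Z \cap \ell = 2(X \cap \ell)$ has degree $2\,|X\cap\ell|$, so $\Delta H_{Z\cap\ell}$ is a string of $2\,|X\cap\ell|$ ones; the restriction sequence gives $\Delta H_{Z\cap\ell}(i) = \Delta H_Z(i) - \Delta H_{Z:\ell}(i-1) \le \Delta H_Z(i)$, because the zero-dimensional scheme $Z:\ell$ has non-decreasing Hilbert function. As $\Delta H_Z(i) \ge 1$ only for $i \le 2t-1$, that string has length at most $2t$, whence $|X \cap \ell| \le t$. Next, if $F$ is reduced with irreducible components of degrees $d_1,\dots,d_k$ summing to $t+1$, then $F$ is singular at no more than $\sum_i \binom{d_i-1}{2} + \sum_{i<j} d_i d_j$ points (the genus bound on each component plus B\'ezout for the pairwise meetings), and this quantity simplifies to $\tfrac{(t+1)(t-2)}{2} + k$. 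Comparing with the $\binom{t+1}{2} = \tfrac{(t+1)t}{2}$ singular points forced above yields $k \ge t+1$; since always $k \le t+1$, equality forces $k = t+1$ and every $d_i = 1$. Chasing the equality conditions, the $\binom{t+1}{2}$ pairwise intersections of these $t+1$ distinct lines must be distinct, i.e.\ no three are concurrent, so $X$ is precisely their pairwise intersections, a star configuration.

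The remaining, and I expect hardest, point is to rule out a non-reduced $F$. If $F = C^2 F'$ with $C$ irreducible, then at each point of $X$ off $C$ the smaller form $F'$ (of degree $t+1-2\deg C$) is itself singular, so $F'$ carries at least $\binom{t+1}{2} - |X\cap C|$ singular points. When $C$ is a line, the bound $|X\cap C| \le t$ combined with the reduced estimate above applied to the degree-$(t-1)$ curve $F'$ (which has at most $\binom{t-1}{2}$ singular points) gives the impossible inequality $\binom{t}{2} \le \binom{t-1}{2}$; the higher-degree cases follow by inducting on the degree of the multiple part, bounding $|X \cap C|$ by restricting $Z$ to $C$ as above. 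With $F$ forced to be reduced, the count in the previous paragraph finishes the proof.
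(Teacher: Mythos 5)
Your ``if'' direction, your extraction of the unique degree-$(t+1)$ curve $F$ singular along $X$, the bound $|X\cap \ell|\le t$ (the identity $\Delta H_{Z\cap\ell}(i)=\Delta H_Z(i)-\Delta H_{Z:\ell}(i-1)$ silently assumes the restriction sequence is exact on graded pieces in every degree; it is safer to argue that $h^1(\mathcal{I}_Z(2t-1))=0$ passes to the subscheme $Z\cap\ell$ via the residual sequence, but the conclusion is correct), and your treatment of the \emph{reduced} case are all fine --- indeed the genus-plus-B\'ezout count forcing $k=t+1$ and all $d_i=1$ is more elementary than the paper's route, which instead uses Bertini to produce integral curves $\mathcal{G}_i$ in $(I_Z)_{2t}$ and B\'ezout against $\mathcal{C}$ to pin the local intersection degree at each point of $Z$ to exactly $4$. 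The genuine gap is in the non-reduced case. Your induction ``on the degree of the multiple part'' cannot close, because its terminal case satisfies every numerical constraint you impose. Take $t$ odd, $s=\frac{t+1}{2}$, $F=\ell_1^2\cdots\ell_s^2$, and $X$ consisting of $t$ points on each $\ell_i$ away from the pairwise intersections: then $\deg F=t+1$; $F$ is singular along each entire line, hence at all $st=\binom{t+1}{2}$ points of $X$; and each line meets $X$ in exactly $t\le t$ points. Quantitatively, if $F$ has $j$ doubled lines and reduced residual $G$ of degree $t+1-2j$, your count places at least $\binom{t+1}{2}-jt=\frac{t(t+1-2j)}{2}$ points of $X$ among the at most $\binom{t+1-2j}{2}=\frac{(t+1-2j)(t-2j)}{2}$ singular points of $G$, and this is contradictory precisely when $2j<t+1$; at $2j=t+1$ both sides vanish and the argument yields nothing. (Your intermediate claim that the residual $F'$ has at most $\binom{\deg F'}{2}$ singular points also presupposes $F'$ reduced; when $F'$ is non-reduced its singular locus is a curve and the count is vacuous --- exactly where the induction was supposed to help, and where it fails.)

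This all-double-lines configuration is precisely the case the paper works hardest to exclude, and it does so with a tool of a different nature than point counting: since $Z$ imposes independent conditions on curves of degree $2t-1$ (read off from \eqref{HFstar2}), one has $\dim_k(I_Z)_{2t-1}=\binom{2t+1}{2}-3\binom{t+1}{2}=\frac{t^2-t}{2}$; on the other hand B\'ezout forces the $s$ doubled lines to be fixed components of $(I_Z)_{2t-1}$ (each carries $t$ double points, so intersection degree $2t>2t-1$), and removing them leaves a residual scheme $Z'$ of $\binom{t+1}{2}$ reduced points with $\dim_k(I_Z)_{2t-1}=\dim_k(I_{Z'})_{2t-1-s}\ge\binom{2t+1-s}{2}-\binom{t+1}{2}=\frac{5t^2-4t-1}{8}$, which exceeds $\frac{t^2-t}{2}$ for $t\ge 2$ --- a contradiction. (The paper's ``exactly $4$'' local intersection statement from the Bertini argument is likewise what lets it bound multiplicities by $2$ and run the $rs=0$ count in the mixed case; your counting happens to dispose of the mixed cases, but not of $r=0$.) So to complete your proof you must import some cohomological or linear-system input of this kind; singular-point counting alone cannot distinguish the star configuration's doubled points from $\binom{t+1}{2}$ double points distributed $t$ apiece on $\frac{t+1}{2}$ doubled lines.
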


\begin{proof}  $(\Leftarrow)$  This follows from Theorem \ref{GHMthm}.

$(\Rightarrow)$
Suppose that $Z$ is a set of
$\binom{t+1}{2}$ double points such that the first difference of
Hilbert function is given by (\ref{HFstar2}), i.e.,
\begin{equation}\label{delta1} \Delta H_{Z}=
\begin{tabular}{ccccccccccccccccccccc}
& & &  &  &  & & $\bullet$&$\bullet$&$\bullet$&\dots&$\bullet$\\
& & &  &  &  &$\bullet$&$\bullet$&$\bullet$&$\bullet$&\dots&$\bullet$&\\
&&&&&&&&\vdots\\
& & &&$\bullet$&\dots&$\bullet$ &$\bullet$ & $\bullet$  &$\bullet$&\dots&$\bullet$\\
& & &$\bullet$&$\bullet$ &\dots  &$\bullet$ & $\bullet$ &$\bullet$&$\bullet$&\dots&$\bullet$\\
&& $\bullet$&$\bullet$ &$\bullet$ &\dots&  $\bullet$ &$\bullet$ &$\bullet$&$\bullet$&\dots&$\bullet$\\
&$\bullet$&$\bullet$ &$\bullet$ &$\bullet$ &\dots  &$\bullet$&$\bullet$ &$\bullet$ &$\bullet$&\dots&$\bullet$ &\\
&$0$&$1$ &$2$ &$3$ & &  &$t$ &$ $ &$ $&&$2t-1$ &\\
\end{tabular}
\\
 t+1  \hbox { \ rows}
 \end{equation}
We want to prove that the support of  $ Z$ must be a star
configuration constructed from  $t+1$ general lines.

Let $I_Z$ be the ideal of $Z$.  We shall sometimes refer to $(I_Z)_t$
as {\it the linear system of all the plane curves of degree $t$ containing $Z$}, since this is
what  the forms in $(I_Z)_t$ correspond to from a geometrical point of view.
From (\ref{HFstar2}) we note that the smallest degree in a minimal
set of generators of $I_Z$ is $t+1$,  the largest degree is $2t$,
and  there is only one curve, say $\mathcal C= \{F=0\}$, in the
linear system defined by $(I_Z)_{t+1}$. Moreover $I_Z$ does not
have new minimal generators until the degree $2t$.

Because $(I_Z)_{2t}$ contains all the forms of type $F\cdot
(x,y,z)^{t-1}$, then the linear system $(I_Z)_{2t}$ cannot be
composed with a pencil, see \cite[pg. 26]{ZariskiBOOK}. We recall that a linear system is composed with a pencil if any of its elements is of the type $\phi_1 \cdot \phi_2 \cdots \phi_n$ where the forms $\phi_i$ are of the type $c_1\psi_1+c_2\psi_2$ for scalars $c_i$ and forms $\psi_i$. Moreover, since $I_Z$ is generated in
degrees $\leq 2t$, the base locus of $(I_Z)_{2t}$ is exactly
$Z$. Hence, $(I_Z)_{2t}$ has no fixed components. By Bertini's
Theorem (for example, see \cite{K}), the general curve of the linear system
$(I_Z)_{2t}$ is integral  (irreducible and reduced). Thus we may
assume that $(I_Z)_{2t}=(G_1,\dots,G_r)$ where $r=\dim_k (I_Z)_{2t}$
and each $\G_i=\{G_i=0\}$ is an integral curve.

For each curve $\G_i,$ consider the intersection $\G_i\cap \C$.
Since each $\G_i$ is integral and $\deg \G_i >
\deg \C$, we  have $\deg (\G_i\cap \C)= \deg \G_i\cdot \deg \C$.
Now each point of $Z$ is a double point of both
$\G_i$ and $\C$.  So, the degree of the scheme  $\G_i\cap \C$ at each point of $Z$ is at least $4$.
Hence
$\deg (\G_i\cap \C) \geq 4{t+1 \choose 2 }=2t(t+1)=\deg \G_i\cdot
\deg \C= \deg (\G_i\cap \C)$.
It follows that $\G_i$ and $\C$ only intersect at the points of $Z$,
and that for each point $P$ in the support of $Z$,
the degree of the scheme  $\G_i\cap \C$ at $P$ is  exactly $4$.

Now observe that the curve $\mathcal C$ is not integral. In fact, $\mathcal C$ has
$\binom{t+1}{2}$ double points, but an integral
curve of degree $t+1$ has at most $\binom{t}{2}$ double points.

We claim  that $\mathcal C$ totally reduces by distinct lines, that is, if
$a\LL$ is an irreducible component of $\C$ (i.e., the polynomial defining
$\LL$  is irreducible) of multiplicity $a$, we
will show that $\LL$ is a line and $a=1$.

Let $P_1$ be a general point on $\LL$. Since $F$ vanishes on
$P_1$, the first difference of the Hilbert function of $Z+P_1$ is the following
$$\Delta H_{Z+P_1}(i) = \left\{
\begin{array}{ll}
i+1 & \mbox{if $0\leq i \leq t$}\\
t +1& \mbox{if $t+1 \leq i\leq 2t-1$} \\
1 & \mbox{if $i = 2t$}\\
0 & \mbox{if $i > 2t$}.\\
\end{array}
\right.$$
To see why this is the case, when we add a point to $Z$,
we have to add a point to the diagram in \eqref{delta1}.
There are  only two places to put a point and maintain
a valid Hilbert function: 1) put a point where $i=t+1$
or 2) put a point where $i= 2t$. In the first case
we get $\Delta H_{Z+P_1}(t+1) = t+2 $, and so we do not have curves
in the linear system defined by $(I_Z)_{t+1}$.  But this
is a contradiction since $P_1$ is a point of $\C$.  So 2) must hold.
We will prove that $\LL$ is a common component for the curves of the linear system
$(I_{Z+P_1})_{2t}$.

Recall that  each $\G_i$ intersects $\LL$  only at the
points of $Z$. If $d$ denotes the degree of $\LL$, then the degree
of $\G_i\cap  a\LL$ is $2t a d$.
Now, consider a curve $\mathcal T=\{T=0\}$ with $T\in (I_{Z+P_1})_{2t}$.
We have that $\deg (\mathcal T \cap a\LL) \geq 2t a d+1$. However $\deg \mathcal T=2t$ and
$\deg a\LL=ad$. So, by
Bezout's Theorem, $\LL$ is a common component for every curve of the linear system
$(I_{Z+P_1})_{2t}$.

Now look at $Z+P_1+P_2$ where $P_2$ is another general point on
$\LL$. Since $\LL$ is a common component for every  curve of
 $ (I_{Z+P_1})_{2t}$, then the first difference of the
Hilbert function of $Z+P_1+P_2$ is given by
$$\Delta H_{Z+P_1+P_2}(i) = \left\{
\begin{array}{ll}
i+1 & \mbox{if $0\leq i \leq t$}\\
t+1 & \mbox{if $t+1 \leq i\leq 2t-1$} \\
1 & \mbox{if $i = 2t$}\\
1 & \mbox{if $i = 2t+1$}\\
0 & \mbox{if $i > 2t+1$}.\\
\end{array}
\right.
$$
So, using Theorem \ref{davis}  with $d=1$, we get that $Z+P_1+P_2$ has a
subscheme of degree $2t+2$ lying on a line $\ell$. But $Z$
imposes independent conditions to the curves of degree $2t-1$, hence
$Z$ has at most $t$ double points with support on a line, and so
$P_1, P_2 \in \ell$ and
  $\deg (\C \cap\ell)=2t+2$. Since  $2t+2>t+1=\deg \C \cdot\ \deg  \ell$, then the
line $\ell$ is a component of $\C$.
Now observe that $P_1$ and $P_2$ are generic points on $\LL$,  so  $\LL$
must be the line $\ell$.

It follows that  every irreducible component of $\C$ is a line, and thus
$$\C=a_1\ell_1+\dots+a_v\ell_v,$$
where the $\ell_i$ are lines and
$a_1+ \cdots+a_v=t+1$.
Now we  will show that $a_i=1$ for all $i$, that is, $\C$ is a union of $t+1$ distinct lines.
First observe that no $a_i$ can be bigger than $2$.  Indeed, if  $a_i > 2$,
then the curve $\C
 \setminus \ell_i$ would be a curve of degree $t$ containing $Z$;
this contradicts the fact that $\C$ is the curve of minimal degree
 containing $Z$. Hence, by this observation, or simply
 by recalling that
$\deg (\G_i\cap \C)$ in every  $P \in Z$ is exactly $4$, the
irreducible components of  $\C$ are simple or double lines.
After relabeling, we can assume
$$\C=2\ell_1+\dots+2\ell_s+\ell_{s+1}+\dots+\ell_{2s+r},$$
where   $2s+r=t+1$.

We observe that the  points of $Z$ lying on the simple lines can lie only
on the intersection with other simple lines.   So there are at most
$\binom{r}{2}$ such points.
Since $\deg (\G_i\cap \C)=4$ for every  $P \in Z$,
the  points of $Z$ on the double lines  cannot lie
on the intersections with other lines.
Moreover, since  $Z$ imposes independent conditions to the curves of $(I_{Z})_{2t}$, on each line $\LL_i$ we have at most $t$ double points,
and so  the number of  points of $Z$ on the double lines is at most
$st$. It follows that   at most $st+\binom{r}{2}$  points of $Z$ lie on $\C$,
that is,
$$|Z|=\binom{t+1}{2}\leq st+\binom{r}{2}.$$

Because $t+1=2s+r$, we have
$$\binom{2s+r}{2}\leq s(2s+r-1)+\binom{r}{2},$$
and from here we get $rs=0$. If $r=0$, then we get
$$\C=2\ell_1+\dots+2\ell_s \ \  \hbox{ where } \ \ 2s=t+1,$$
and $Z$ must have $t$ points on each line $\ell_i$.
By Bezout's Theorem, it follows that the curves of degree $2t-1$ through $Z$
have the lines  $\ell_i$ as fixed components. Removing these $s$
lines from $Z$
we remain with a scheme $Z'$ of $|Z|=\binom{t+1}{2}$ simple points and we get
$$\dim_k (I_Z)_{2t-1}=\dim_k (I_{Z'})_{2t-1-s} \geq
{2t-1-s+2 \choose 2} - {t+1 \choose 2} = \frac{5t^2-4t-1}8.
$$
But $(I_Z)_{2t-1} $ is not defective, hence
$$\dim_k (I_Z)_{2t-1} = {2t-1+2\choose 2} - 3{t+1 \choose 2} =\frac{t^2-t}2.
$$
But $\frac{t^2-t}2 \not\geq \frac{5t^2-4t-1}8$ for any $t$, so we get a contradiction.
Therefore, $s=0$ and
 $\C$ is a union of $t+1$ distinct
lines.
It follows that  the support of $Z$ is a star configuration of $t+1$
lines.
\end{proof}

\begin{remark}
It is easy to see that if $\Delta H_{Z}$ is of type (\ref{HFstar2}),
Construction \ref{algorithm} gives a set of double points on a star configuration. For instance, if
$\Delta H=(1, 2, 3, 4,4,4)$, then $\Delta H^\star = (6,5,4,3)$.  Step 0 and the final step
of Construction \ref{algorithm} are given in Figure \ref{initial}, respectively Figure \ref{final}.
In Figure \ref{final}, the three reduced points near the intersection of $\ell_i$ and $\ell_j$
should be viewed as one double point at $\ell_i \cap \ell_j$.

\begin{figure}[!htbp]
\centering
\mbox{%
\begin{minipage}{0.50\textwidth}
\begin{tikzpicture}[scale=0.60]
\clip(-0.7,-2.5) rectangle (12,4);
\draw [line width=0.8pt] (0.43356525262598167,-0.837313686104764)-- (8.854928961644573,1.7840629353535518);
\draw [line width=0.8pt] (2.435895714524843,-1.8956076501016481)-- (8.113055412043682,3.8363202135355516);
\draw [line width=0.8pt] (6.007747929403006,-2.4789946880528064)-- (6.258448173682883,3.9222182158933747);
\draw [line width=0.8pt] (0.4876183843762104,0.48042166509819506)-- (8.859487147904208,-1.6315379685322804);
\draw (0.5716499374485773,0.4911445340177815) node[anchor=north west] {$\ell_1$};
\draw (0.9054133423577974,-0.5310058935167038) node[anchor=north west] {$\ell_2$};
\draw (2.9914346230404227,-1.365414405789753) node[anchor=north west] {$\ell_3$};
\draw (6.287348246518971,-1.532296108244363) node[anchor=north west] {$\ell_4$};
\begin{scriptsize}
\draw [color=black] (6.421230003179938,-1.0164422459974825) circle (3.5pt);
\draw [color=black] (6.823545190534506,-1.1179337360923736) circle (3.5pt);
\draw [color=black] (7.270032115260305,-1.2305683686457058) circle (3.5pt);
\draw [color=black] (7.726236110334489,-1.3456543129566163) circle (3.5pt);
\draw [color=black] (8.21689477243152,-1.4694320879683898) circle (3.5pt);
\draw [fill=black] (7.66879385079908,1.414846386004017) circle (3.5pt);
\draw [fill=black] (6.766018934556358,1.1338333408907513) circle (3.5pt);
\draw [fill=black] (7.2339071862692155,1.2794761805630945) circle (3.5pt);
\draw [fill=black] (8.109600237525548,1.5520592646636584) circle (3.5pt);
\draw [color=black] (6.658868397026037,2.3681045010267194) ++(-3.5pt,0 pt) -- ++(3.5pt,3.5pt)--++(3.5pt,-3.5pt)--++(-3.5pt,-3.5pt)--++(-3.5pt,3.5pt);
\draw [color=black] (7.054097976498693,2.7671469024766946) ++(-3.5pt,0 pt) -- ++(3.5pt,3.5pt)--++(3.5pt,-3.5pt)--++(-3.5pt,-3.5pt)--++(-3.5pt,3.5pt);
\draw [fill=black] (6.233437899316946,3.2836225437497712) ++(-3.5pt,0 pt) -- ++(3.5pt,3.5pt)--++(3.5pt,-3.5pt)--++(-3.5pt,-3.5pt)--++(-3.5pt,3.5pt);
\draw [color=black] (8.658434177822425,-1.580818616733065) circle (3.5pt);
\draw [fill=black] (8.538567179858813,1.685586796885862) circle (3.5pt);
\draw [color=black] (7.417939024487691,3.1344979639264112) ++(-3.5pt,0 pt) -- ++(3.5pt,3.5pt)--++(3.5pt,-3.5pt)--++(-3.5pt,-3.5pt)--++(-3.5pt,3.5pt);
\draw [color=black] (7.8229006493133815,3.5433662968106594) ++(-3.5pt,0 pt) -- ++(3.5pt,3.5pt)--++(3.5pt,-3.5pt)--++(-3.5pt,-3.5pt)--++(-3.5pt,3.5pt);

\draw [fill=black] (6.214644198463987,2.803756715304235) ++(-3.5pt,0 pt) -- ++(3.5pt,3.5pt)--++(3.5pt,-3.5pt)--++(-3.5pt,-3.5pt)--++(-3.5pt,3.5pt);
\draw [fill=black] (6.250759881302112,3.7259104837710244) ++(-3.5pt,0 pt) -- ++(3.5pt,3.5pt)--++(3.5pt,-3.5pt)--++(-3.5pt,-3.5pt)--++(-3.5pt,3.5pt);
\end{scriptsize}
\end{tikzpicture}
\caption{Initial setup of Construction}\label{initial}
\end{minipage}%
\hskip5pt
\begin{minipage}{0.50\textwidth}
\begin{tikzpicture}[scale=0.60]
\clip(-0.7,-2.5) rectangle (12,4);
\draw [line width=0.8pt] (0.43356525262598167,-0.837313686104764)-- (8.854928961644573,1.7840629353535518);
\draw [line width=0.8pt] (2.435895714524843,-1.8956076501016481)-- (8.113055412043682,3.8363202135355516);
\draw [line width=0.8pt] (6.007747929403006,-2.4789946880528064)-- (6.258448173682883,3.9222182158933747);
\draw [line width=0.8pt] (0.4876183843762104,0.48042166509819506)-- (8.859487147904208,-1.6315379685322804);
\draw (0.5716499374485773,0.4911445340177815) node[anchor=north west] {$\ell_1$};
\draw (0.9054133423577974,-0.5310058935167038) node[anchor=north west] {$\ell_2$};
\draw (2.9914346230404227,-1.365414405789753) node[anchor=north west] {$\ell_3$};
\draw (6.287348246518971,-1.532296108244363) node[anchor=north west] {$\ell_4$};
\begin{scriptsize}
\draw [color=black] (2.6682340415179264,-0.06967919940760992) circle (3.5pt);
\draw [color=black] (2.9668292911611607,-0.14500540576044046) circle (3.5pt);
\draw [color=black] (3.728355738427461,-0.3371146184101969) circle (3.5pt);
\draw [color=black] (4.071834598727522,-0.42376355050041203) circle (3.5pt);
\draw [color=black] (5.876860501684691,-0.8791149125894255) circle (3.5pt);
\draw [fill=black] (4.955063467282757,0.5701246010189326) circle (3.5pt);
\draw [fill=black] (3.191465777581962,0.02115724689686202) circle (3.5pt);
\draw [fill=black] (4.681639817854343,0.4850141145073616) circle (3.5pt);
\draw [fill=black] (6.075467787187878,0.9188806685944131) circle (3.5pt);
\draw [color=black] (4.190170466659754,-0.12440922201586375) ++(-3.5pt,0 pt) -- ++(3.5pt,3.5pt)--++(3.5pt,-3.5pt)--++(-3.5pt,-3.5pt)--++(-3.5pt,3.5pt);
\draw [color=black] (5.073400059331326,0.7673409808955843) ++(-3.5pt,0 pt) -- ++(3.5pt,3.5pt)--++(3.5pt,-3.5pt)--++(-3.5pt,-3.5pt)--++(-3.5pt,3.5pt);
\draw [fill=black] (6.153464568250792,1.2416368238606692) ++(-3.5pt,0 pt) -- ++(3.5pt,3.5pt)--++(3.5pt,-3.5pt)--++(-3.5pt,-3.5pt)--++(-3.5pt,3.5pt);
\draw [color=black] (6.2254648496410425,-0.9670568442880333) circle (3.5pt);
\draw [fill=black] (6.347633063617987,1.0035994525655392) circle (3.5pt);
\draw [color=black] (6.142070865117937,1.8463213681165076) ++(-3.5pt,0 pt) -- ++(3.5pt,3.5pt)--++(3.5pt,-3.5pt)--++(-3.5pt,-3.5pt)--++(-3.5pt,3.5pt);
\draw [color=black] (6.3392372103356065,2.045389798130655) ++(-3.5pt,0 pt) -- ++(3.5pt,3.5pt)--++(3.5pt,-3.5pt)--++(-3.5pt,-3.5pt)--++(-3.5pt,3.5pt);
\draw [fill=black] (6.081056425923274,-0.6071844102353061) ++(-3.5pt,0 pt) -- ++(3.5pt,3.5pt)--++(3.5pt,-3.5pt)--++(-3.5pt,-3.5pt)--++(-3.5pt,3.5pt);
\draw [fill=black] (6.194715293332399,2.294905337611025) ++(-3.5pt,0 pt) -- ++(3.5pt,3.5pt)--++(3.5pt,-3.5pt)--++(-3.5pt,-3.5pt)--++(-3.5pt,3.5pt);
\end{scriptsize}
\end{tikzpicture}
\caption{Output of Construction}\label{final}
\end{minipage}
}
\end{figure}

\end{remark}

\subsection{Double points on a star configuration  plus one point}

In this section  we will investigate when a scheme of one simple
point union $t+1\choose2$ double points has the  same Hilbert
function of one simple point union double points with support on a
star configuration.

\begin{theorem}\label{puntosotto}
Let $t\geq 3$ be an integer,  let $Z \subset \pr^2$ be a set of
${t+1}\choose 2$ double points, and let $P$ be a simple point. Then the sequence
$$
\Delta H = (\underbrace{1,2,3,\ldots,t}_{t},\underbrace{t+1,\ldots,t+1}_{t},1)
$$
is the first difference of the Hilbert function of $Z+P$ if and
only if $Z$ is a set of double points whose support is a star
configuration  of $t+1$ lines and $P$ lies on one of those  lines.
\end{theorem}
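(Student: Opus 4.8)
The plan is to reduce the statement to Theorem~\ref{doublestar}, treating the extra reduced point $P$ as almost free. For the implication $(\Leftarrow)$, suppose $Z$ is a star configuration of double points on general lines $\ell_1,\dots,\ell_{t+1}$ and that $P$ lies on one of them. By Theorem~\ref{GHMthm} the first difference $\Delta H_Z$ equals $\Delta H_1$, so $\dim_k (I_Z)_{t+1}=1$ and the reduced curve $\C=\ell_1\cup\cdots\cup\ell_{t+1}$, defined by $F=L_1\cdots L_{t+1}$, is the unique curve of degree $t+1$ through $Z$. Since $P\in\C$ we still have $F\in(I_{Z+P})_{t+1}$, so adjoining $P$ does not drop the Hilbert function in degree $t+1$. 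As adjoining one reduced point raises $\Delta H$ by exactly one box, and the only two degrees at which a box may be added to $\Delta H_1$ while preserving a valid Hilbert function are $t+1$ and $2t$, the box must be placed in degree $2t$; this gives $\Delta H_{Z+P}=\Delta H_2$. (This is the same box count used to compute $\Delta H_{Z+P_1}$ in the proof of Theorem~\ref{doublestar}.)

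For $(\Rightarrow)$, I would first read off $\dim_k(I_{Z+P})_{t+1}=1$ from $\Delta H_2$; hence a unique curve $\C=\{F=0\}$ of degree $t+1$ passes through $Z+P$, so $P\in\C$ and $\C\supseteq Z$, and $\C$ is in fact the minimal curve through $Z$ (the relevant values of $H_Z$ and $H_{Z+P}$ coincide in this range). Next I would determine $\Delta H_Z$. Because $Z=(Z+P)\setminus\{P\}$ differs from $Z+P$ by a single reduced point, $\Delta H_Z$ is obtained from $\Delta H_2$ by deleting one box, and the only deletions yielding a valid Hilbert function occur in degree $2t$, giving the star function $\Delta H_1$, or in degree $2t-1$, giving $\Delta H'=(1,\dots,t,\underbrace{t+1,\dots,t+1}_{t-1},t,1)$. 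In the first case Theorem~\ref{doublestar} shows $Z$ is a star configuration whose defining lines are the components of $\C$; since $P\in\C$, the point $P$ lies on one of those lines, as desired.

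The main obstacle is to exclude the alternative $\Delta H_Z=\Delta H'$. This cannot be settled numerically, since $\Delta H'$ is itself the Hilbert function of $\binom{t+1}{2}$ double points (for $t=3$, three double points on each of two lines realize $(1,2,3,4,4,3,1)$). The point is that $\Delta H'$ and $\Delta H_2$ differ only in degree $2t-1$, so if $\Delta H_{Z+P}=\Delta H_2$ then $P$ must impose its new condition exactly in degree $2t-1$. Applying Theorem~\ref{davis} at the plateau (here $\Delta H_Z(2t-3)=\Delta H_Z(2t-2)=t+1$, which uses $t\ge 3$) gives $(I_Z)_{2t-2}=F\cdot R_{t-3}$, whose base locus is all of $\C$, so $P$ is a base point there. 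For $P$ not to be a base point of $(I_Z)_{2t-1}$ there must exist $G\in(I_Z)_{2t-1}$ with $F\nmid G$ (indeed $\dim_k(I_Z)_{2t-1}=\binom t2+1>\binom t2=\dim_k (F R_{t-2})$), and a B\'ezout count between $F$ and $G$, both doubly vanishing at the $\binom{t+1}{2}$ points of $Z$, forces $F$ and $G$ to share a common component. I expect the technical heart to be showing that this shared component drives $\C$ to a union of lines in which the double points of $Z$ sit on too few lines: then the component of $\C$ through $P$ remains in the base locus of $(I_Z)_{2t-1}$, contradicting the required jump in degree $2t-1$, and equivalently the defectivity $\dim_k(I_Z)_{2t-1}=\binom t2+1$ clashes with the double-line count precisely as in the concluding estimate of the proof of Theorem~\ref{doublestar}.
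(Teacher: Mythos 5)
Your $(\Leftarrow)$ direction and your reduction of $(\Rightarrow)$ to the dichotomy $\Delta H_Z \in \{\Delta H_1, \Delta H'\}$ match the paper, and your treatment of the first case (Theorem \ref{doublestar} plus $P \in \C$) is essentially the paper's. You also correctly diagnose the crux: excluding $\Delta H_Z = \Delta H'$, and your two-line example showing that $\Delta H'$ is itself realizable by $\binom{t+1}{2}$ double points is apt, since it proves any contradiction must exploit the position of $P$ relative to $Z$ and not just the numerics of $\Delta H_Z$. But at exactly this point your argument stops being a proof. From the B\'ezout count you obtain only that each $G \in (I_Z)_{2t-1}$ with $F \nmid G$ shares \emph{some} component with $F$; you then ``expect'' that the component of $\C$ through $P$ stays in the base locus of $(I_Z)_{2t-1}$. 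Nothing you have written forces the shared component to be the one through $P$, and your own example illustrates why this cannot follow formally: there $F = x^2y^2$, every such $G$ does share a component with $F$, and the base locus of $(I_Z)_{2t-1}$ happens to contain all of $\C$ --- but establishing the analogous structural statement for an \emph{arbitrary} $Z$ with $\Delta H_Z = \Delta H'$ would require a component-by-component analysis of $\C$ comparable in length to the proof of Theorem \ref{doublestar}, which you have not carried out. Nor does the ``concluding estimate'' of Theorem \ref{doublestar} transfer: it was derived under $\Delta H_Z = \Delta H_1$, where $(I_Z)_{2t-1}$ is nondefective, whereas under $\Delta H'$ one has $\dim_k (I_Z)_{2t-1} = \binom{t}{2}+1$ and, as your example shows, no contradiction is available from $Z$ alone.

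The paper closes this gap by a different, self-contained route, working with $(I_{Z+P})_{2t}$ rather than $(I_Z)_{2t-1}$. First it shows $(I_{Z+P})_{2t}$ has no fixed component: if $\mathcal{T}$ were one, adding a generic point $Q \in \mathcal{T}$ leaves $\dim_k (I_{Z+P+Q})_{2t}$ unchanged, so $\Delta H_{Z+P+Q}$ ends in two consecutive values equal to $1$, and Theorem \ref{davis} with $d=1$ produces a subscheme of degree $2t+2$ on a line $\ell$, necessarily consisting of $t$ double points of $Z$ together with $P$ and $Q$; genericity of $Q$ forces $\mathcal{T} = \ell$, and the residual degree-$(2t+1)$ subscheme on $\ell$ shows $P$ cannot impose an independent condition in degree $2t-1$ --- contradicting precisely the feature distinguishing $\Delta H_{Z+P}$ from $\Delta H'$ in that degree. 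With no fixed component, and no composition with a pencil (since $F\cdot(x,y,z)^{t-1} \subseteq (I_{Z+P})_{2t}$), Bertini provides an integral curve $\mathcal{G}$ in the system, and then $\deg(\mathcal{G} \cap \C) \geq 4\binom{t+1}{2}+1 > 2t(t+1) = \deg \mathcal{G} \cdot \deg \C$ --- the crucial $+1$ coming from $P \in \C$ --- forces $\mathcal{G}$ to contain a component of $\C$, impossible for an integral curve of degree $2t$. To salvage your approach you would have to prove your base-locus claim outright; the paper's auxiliary generic point $Q$ together with Davis's theorem is the mechanism that does that work.
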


\begin{proof} $(\Leftarrow)$  One can compute the Hilbert function
from \cite[Theorem 3.2]{GHM}.

$(\Rightarrow)$
Suppose that $Z+P$ is a set of
${t+1}\choose 2$ double points and a simple point, such that the
first difference of the Hilbert function looks like

\begin{equation}\label{deltaZ+P1} \Delta H_{Z+P}=
\begin{tabular}{ccc ccc ccc ccc ccccccccc}
& & &  &  &  & & $\bullet$&$\bullet$&\dots&$\bullet$&$\bullet$ \\
& & &  &  &  &$\bullet$&$\bullet$&$\bullet$&\dots&$\bullet$&$\bullet$\\
&&&&&&&&\vdots\\
& & &&$\bullet$&\dots&$\bullet$ &$\bullet$ & $\bullet$  &\dots&$\bullet$&$\bullet$\\
& & &$\bullet$&$\bullet$ &\dots  &$\bullet$ & $\bullet$ &$\bullet$&\dots&$\bullet$&$\bullet$\\
&& $\bullet$&$\bullet$ &$\bullet$ &\dots&  $\bullet$ &$\bullet$ &$\bullet$&\dots&$\bullet$&$\bullet$\\
&$\bullet$&$\bullet$ &$\bullet$ &$\bullet$ &\dots  &$\bullet$&$\bullet$ &$\bullet$ &\dots&$\bullet$&$\bullet$ &$\bullet$
\\
&$0$&$1$ &$2$ &$3$ & & &$t$ &$ $ &$ $&& & $2t$ &\\
\end{tabular}
\\ t+1 \hbox{ \ rows}
 \end{equation}
Our goal is to show that the support of  $Z$ must be a star
configuration of $t+1$ lines and a point $P$ that lies on one of those lines.

Consider only the scheme $Z$.
The first difference of the Hilbert
function of  $Z$ can only have one of the following two forms:
\begin{equation}\label{cases}
\begin{tabular}{ccccccccccccccccccc}
& & &  &  &  & & $\bullet$&$\bullet$&\dots&$\bullet$&$\bullet$ \\
& & &  &  &  &$\bullet$&$\bullet$&$\bullet$&\dots&$\bullet$&$\bullet$\\
&&&&&&&&\vdots\\
& & &&$\bullet$&\dots&$\bullet$ &$\bullet$ & $\bullet$  &\dots&$\bullet$&$\bullet$\\
& & &$\bullet$&$\bullet$ &\dots  &$\bullet$ & $\bullet$ &$\bullet$&\dots&$\bullet$&$\bullet$\\
&& $\bullet$&$\bullet$ &$\bullet$ &\dots&  $\bullet$ &$\bullet$ &$\bullet$&\dots&$\bullet$&$\bullet$\\
&$\bullet$&$\bullet$ &$\bullet$ &$\bullet$ &\dots  &$\bullet$&$\bullet$ &$\bullet$ &\dots&$\bullet$&$\bullet$
\\
\end{tabular}
\text{ \  \ \ or }  \  \begin{tabular}{ccccccccccccccccccc}
& & &  &  &  & & $\bullet$&$\bullet$&\dots&$\bullet$& &\\
& & &  &  &  &$\bullet$&$\bullet$&$\bullet$&\dots&$\bullet$&$\bullet$&\\
&&&&&&&&\vdots\\
& & &&$\bullet$&\dots&$\bullet$ &$\bullet$ & $\bullet$  &\dots&$\bullet$&$\bullet$&\\
& & &$\bullet$&$\bullet$ &\dots  &$\bullet$ & $\bullet$ &$\bullet$&\dots&$\bullet$&$\bullet$&\\
&& $\bullet$&$\bullet$ &$\bullet$ &\dots&  $\bullet$ &$\bullet$ &$\bullet$&\dots&$\bullet$&$\bullet$&\\
&$\bullet$&$\bullet$ &$\bullet$ &$\bullet$ &\dots  &$\bullet$&$\bullet$ &$\bullet$ &\dots&$\bullet$&$\bullet$ & $\bullet$ & \\
\end{tabular}
\end{equation}
To see why, note that $\Delta H_Z$ is constructed from
$\Delta H_{Z+P}$ by removing exactly one point.  The two cases represent the
only two ways to remove a point from \eqref{deltaZ+P1} and still have a valid
Hilbert function.

If $\Delta H_Z$ is of the first type, then by Theorem \ref{HFstar2}, the
support of $Z$ is a star configuration and the curve $\C$
is given by the product of the $t+1$
lines of the star configuration. If
$P$ does not lie on a line of the star configuration, then $P\notin \C$, and
thus $(I_{Z+P})_{t+1}=0$.  Hence the first difference Hilbert
function of  $Z+P$ would have type
\[  \Delta H_{Z+P}= \begin{tabular}{ccccccccccccccccccccc}
& &  &  &  & &&& $\bullet$&&&& &  &  &  \\
& & &  &  &  & & $\bullet$&$\bullet$&$\bullet$&\dots&$\bullet$&$\bullet$ \\
& & &  &  &  &$\bullet$&$\bullet$&$\bullet$&$\bullet$&\dots&$\bullet$&$\bullet$\\
&&&&&&&&\vdots\\
& & &&$\bullet$&\dots&$\bullet$ &$\bullet$ & $\bullet$  &$\bullet$&\dots&$\bullet$&$\bullet$\\
& & &$\bullet$&$\bullet$ &\dots  &$\bullet$ & $\bullet$ &$\bullet$&$\bullet$&\dots&$\bullet$&$\bullet$\\
&& $\bullet$&$\bullet$ &$\bullet$ &\dots&  $\bullet$ &$\bullet$ &$\bullet$&$\bullet$&\dots&$\bullet$&$\bullet$\\
&$\bullet$&$\bullet$ &$\bullet$ &$\bullet$ &\dots  &$\bullet$&$\bullet$ &$\bullet$ &$\bullet$&\dots&$\bullet$&$\bullet$
\end{tabular}\]
which is different from (\ref{deltaZ+P1}).
So $P$ lies on a line of $\C$, and the conclusion follows.

It suffices to show that the second case cannot occur.
So assume for a contradiction that $\Delta H_Z$ is given by the
second diagram in \eqref{cases}.
Note that the smallest degree in a
minimal set of generators of $I_{Z}$ is
$t+1$ and  there is only one curve, say $\mathcal C= \{F=0\}$,  in the linear system defined by $(I_Z)_{t+1}$.

Now we will show that  the linear system $(I_{Z+P})_{2t}$ has no fixed components.
Suppose for a contradiction that $\mathcal T$ is a fixed irreducible component
of $(I_{Z+P})_{2t}$ and let $Q\in \mathcal T$ be a generic point.
Since
$\dim_k (I_{Z+P+Q})_{2t}=\dim_k (I_{ Z+P})_{2t}$
we have
\[\Delta H_{Z+P+Q}(i) = \left\{
\begin{array}{ll}
i+1 & \mbox{if $i \leq t$}\\
t+1 & \mbox{if $t+1 \leq i\leq 2t-1$} \\
1& \mbox{if $i= 2t$} \\
1& \mbox{if $i= 2t+1$} \\
0 & \mbox{if $i\geq 2t+2$.}\\
\end{array}\\
\right. \]
By Theorem \ref{davis} with $d=1$,  we have  that  $Z+P+Q$ has a subscheme $W$  of degree $2t+2$ lying on a line $\ell$. But $W$  cannot be
contained in $Z$, since $(I_{ Z})_{2t}$ cannot have a scheme of
degree $2t+2$ on a line.  So the scheme $W$ is the intersection
of $\ell$ with $t$  points of $Z $ plus $P$ and $Q$. But $Q$
is generic on $\mathcal T$, thus $\mathcal T$ should be the line
$\ell$. Now consider  the scheme $W \setminus Q \subset
\ell$ which is the union of $t$  points of $Z$ plus $P$.

Since the scheme $W\setminus Q$  has degree $2t+1$, the point $P$
cannot give independent conditions to the curves of the linear
system $(I_{Z+P})_{2t-1}$. If $\Delta H_{Z}$ resembles the second case
of (\ref{cases}), then $\Delta H_{Z+P}$ cannot be of type
(\ref{deltaZ+P1}), since in this case $P$ would impose independent
conditions to the curves of  $(I_{Z+P})_{2t-1}$, a contradiction.
Thus, the linear system $(I_{Z+P})_{2t}$ does not have fixed
components.

Moreover, since $(I_{Z+P})_{2t}$ contains forms of the type
$F\cdot (x,y,z)^{t-1}$, then it cannot be composed
with a pencil, see \cite[pg.26]{ZariskiBOOK}. Using Bertini's Theorem (see \cite{K}), the general
curve of $(I_{Z+P})_{2t}$ is reduced and irreducible. Let
$\mathcal G$ be such a general integral
 curve. We have that $\deg \mathcal G \cdot \deg
\C=2t(t+1)$. But since $\deg (\mathcal G \cap \C)\geq
4\frac{t(t+1)}{2}+1$, the curve $\mathcal G$ would contain a component of
$\C$, thus giving a contradiction.
\end{proof}

Finally we show that if we add a multiplicity \lq\lq on the
top\rq\rq  of (\ref{delta1}),  a statement similar  to that of
Theorem \ref{puntosotto} does not hold.  More precisely
let $t\geq 3$ be an integer,  and  let $Z \subset \pr^2$ be a set of
${t+1}\choose 2$ double points.  Suppose that $P$ is a simple point
such that the first difference Hilbert function of $Z+P$ has the
form
\begin{equation} \label{deltaZ+P2}
  \Delta H_{Z+P}= \begin{tabular}{ccccccccccccccccccccc}
  & &  &  &  & &&& $\bullet$&&&& \\
& & &  &  &  & & $\bullet$&$\bullet$&$\bullet$&\dots&$\bullet$&$\bullet$ \\
& & &  &  &  &$\bullet$&$\bullet$&$\bullet$&$\bullet$&\dots&$\bullet$&$\bullet$\\
&&&&&&&&\vdots\\
& & &&$\bullet$&\dots&$\bullet$ &$\bullet$ & $\bullet$  &$\bullet$&\dots&$\bullet$&$\bullet$\\
& & &$\bullet$&$\bullet$ &\dots  &$\bullet$ & $\bullet$ &$\bullet$&$\bullet$&\dots&$\bullet$&$\bullet$\\
&& $\bullet$&$\bullet$ &$\bullet$ &\dots&  $\bullet$ &$\bullet$ &$\bullet$&$\bullet$&\dots&$\bullet$&$\bullet$\\
&$\bullet$&$\bullet$ &$\bullet$ &$\bullet$ &\dots  &$\bullet$&$\bullet$ &$\bullet$ &$\bullet$&\dots&$\bullet$&$\bullet$
\\
&$0$&$1$ &$2$ &$3$ & & &$t$ &$ $ &$ $&&&$2t-1$&  &\\
\end{tabular}
\\ t+1 \hbox{ \ rows}
 \end{equation}
Obviously, if the support of $Z$ is a star configuration  of $t+1$
lines and $P$ is a generic simple point, then the  first
difference of the Hilbert function of $Z+P$ is  of type
(\ref{deltaZ+P2}), but the converse
is not true, as we show in the following example.

\begin{example}
Consider a scheme $Z$ of ${t+1}\choose 2$ double points,
whose support, except one double point, say $2Q$,
are the points of a star configuration of the $t+1$ general lines
$\ell_1,\dots$,  $\ell_{t+1}$.
More precisely, the points of $Z$ lie  on the intersections  $\ell_i \cap \ell_j$, for all $i \neq j$, except for $(i,j) = (1,2)$.
Let the simple point $P$ be a general point on  $\ell_{1}$ and suppose
 $Q$ is a general point on  $\ell_2$ (see Figure \ref{fig11}).  Note
that in Figure \ref{fig11}, all the points of intersection are double points.

In order to prove that the  first difference of the Hilbert
function of $Z+P$ is of type (\ref{deltaZ+P2}),  it is enough to
prove that $\dim_k (I_{Z+P})_{t+2}=2$ and  $(I_{Z+P})_{2t-1}$ is not defective.
By Bezout's Theorem, the $t$ lines $\ell_{2}, \dots, \ell_{t+1}$
are fixed components for the curves of the two linear systems
$(I_{Z+P})_{t+2}$ and  $(I_{Z+P})_{2t-1}$.  Hence we have
$$\dim_k (I_{Z+P})_{t+2}= \dim_k (I_{X})_{2} ~~~
\mbox{and}~~~ \dim_k (I_{Z+P})_{2t-1}=\dim_k(I_{X})_{t-1},$$
where $X$ is a union of  $t$  simple points on the line $\ell_{1}$
and the point $Q \in \ell_2$.
Since $2 <t$, in order to compute $\dim (I_{X})_{2}$,
we may remove the line $\ell_{1}$ and we get
$\dim_k (I_{Z+P})_{t+2}= \dim_k (I_{Q})_{1}=2.$
Since $X$ imposes independent conditions to the curve of degree $t-1$ we have
$$\dim_k (I_{X})_{t-1}= {t-1+2\choose 2} - t-1={t\choose 2} -1,$$
which is the expected dimension of $ (I_{Z+P})_{2t-1}$, and we are
done.
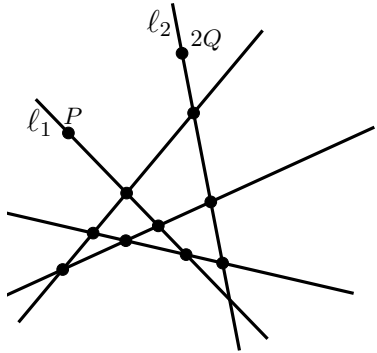
\begin{figure}[!h]
\centering
\mbox{%
\begin{minipage}{0.40\textwidth}
\begin{tikzpicture}[scale=0.55]
\clip(-1.7,-3.5) rectangle (12,6);
\draw [line width=1.2pt] (-2.16,-2.36)-- (7.16,1.9);
\draw [line width=1.2pt] (-1.44,-2.82)-- (4.46,4.24);
\draw [line width=1.2pt] (-2.5,0.)-- (6.66,-2.12);
\draw [line width=1.2pt] (-1.0238993710691824,2.5763052208835338)-- (4.58,-3.22);
\draw [line width=1.2pt] (2.279245283018868,4.895582329317269)-- (3.9,-3.5);
\draw (-1.5,2.6) node[anchor=north west] {$\ell_1$};
\draw (1.4,5.006024096385542) node[anchor=north west] {$\ell_2$};
\draw (1.3,-3.7) node[anchor=north west] {Figure 1};
\begin{scriptsize}
\draw [fill=black] (1.1717603989446856,0.30525905365245454) circle (4.0pt);
\draw [fill=black] (2.7912842009522194,2.243197704868249) circle (4.0pt);
\draw [fill=black] (3.2063994309236463,0.09288214331917732) circle (4.0pt);
\draw [fill=black] (1.153340864239537,-0.845533038448452) circle (4.0pt);
\draw [fill=black] (0.36292727423501103,-0.6625988887967494) circle (4.0pt);
\draw [fill=black] (-0.3729706973038376,-1.5431818852483203) circle (4.0pt);
\draw [fill=black] (2.6103764970817362,-1.1827508923376944) circle (4.0pt);
\draw [fill=black] (3.492051147780308,-1.386806597521207) circle (4.0pt);
\draw [fill=black] (-0.23466331368864224,1.7599713117627585) circle (4.0pt);
\draw[color=black] (-0.12452830188679241,2.1676706827309236) node {$P$};
\draw [fill=black] (2.512396010393332,3.687851075288592) circle (4.0pt);
\draw[color=black] (3.1,4) node {$2Q$};
\draw [fill=black] (1.937725761225849,-0.48700517780878594) circle (4.0pt);
\end{scriptsize}
\end{tikzpicture}
\end{minipage}
}
\caption{All but one double point on a star configuration plus one double
point and one simple point}\label{fig11}
\end{figure}

\end{example}

\section{Final remarks}

In this paper we presented an algorithm that, given a valid Hilbert function $H$ for a
zero-dimensional scheme, will produce a scheme consisting of double and simple points having Hilbert function $H$.

We know that for some special $H$ (for example, see Theorem \ref{HFofdoublepts}) our algorithm will produce a set consisting of {\em only} double points.
Furthermore, in  Section \ref{specialconfigsection}
we showed that for one family of valid Hilbert functions
$H$, not only does our algorithm produce a scheme with the
maximal possible number of double points, our algorithm produces
the only possible configuration of double points with this
Hilbert function.

However, there are $H$ for which our algorithm does not perform well. Consider, for example, when $H$ is the Hilbert function of double points with collinear support; our algorithm will produce a set with just {\em one} double point! Thus it is natural to ask how well our algorithm performs.

The major obstacle in answering this question is the following: given a Hilbert function $H$ of a degree $3t$ zero-dimensional scheme, we do not know the maximal number of double points that the scheme can possess. Of course $t$ gives an upper bound, but this bound might not be sharp. Ideally we would like to compare this unknown number with the number of double points that our algorithm produces for $H$ and possibly make some asymptotic estimate.

This problem will be the object of further investigations, but we can already present an interesting result. Consider the scheme consisting of $t$, generic double points and let $H$ be its Hilbert function. It is well known (e.g., see \cite{alexanderhirschowitz}) that, with the exceptions $t=2$ and $t=5$, $H(i)=\min\left\{{i+2\choose 2}, 3t\right\}$. The following result describes the asymptotic behavior of our algorithm for this $H$.

\begin{proposition} Let $H$ be the Hilbert function of $t$ generic double points, and let $s(t)$ be the number of double points produced by our algorithm with input $H$.  Then
\[
\lim_{t\rightarrow+\infty} \frac{s(t)}{t}=\frac{3}{4}.
\]
\end{proposition}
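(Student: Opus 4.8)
The plan is to compute $s(t)$ exactly from the formula in Theorem~\ref{lowerbounds} and then isolate its leading term. I begin by recording the combinatorial data attached to $H$. Since $H(i)=\min\{\binom{i+2}{2},\,3t\}$ (the exceptional values $t=2,5$ do not affect a limit), I write $3t=\binom{\alpha+1}{2}+r$ with $0\le r\le\alpha$; this determines $\alpha=\alpha(t)$, and the relation $\alpha(\alpha+1)/2=3t-r$ together with $0\le r\le\alpha$ gives $\alpha^2=6t+O(\sqrt{t})$, so in particular $\alpha=O(\sqrt t)$. The first difference is then $\Delta H=(1,2,\ldots,\alpha,r)$ (the last entry omitted when $r=0$), and a direct count from the definition of the conjugate yields
\[ h_i^\star=\alpha-i+1+[\,i\le r\,],\qquad i=1,\ldots,\alpha, \]
where $[\,\cdot\,]$ denotes the indicator of the displayed condition. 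As a check, for $t=4$ generic double points one has $\Delta H=(1,2,3,4,2)$, and the formula reproduces $\Delta H^\star=(5,4,2,1)$ exactly as in Example~\ref{illustrateex}.

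Next I substitute this into Theorem~\ref{lowerbounds}. Writing $A_i=\left\lfloor (h_i^\star-(i-1))_+/2\right\rfloor$ and $B_i=\#\{k\mid i+1\le k\le\alpha,\ h_k^\star\ge i\}$, we have $s(t)=\sum_{i=1}^{\alpha-1}\min\{A_i,B_i\}$, and the formula for $h_i^\star$ gives $h_i^\star-(i-1)=\alpha-2i+2+[\,i\le r\,]$. The key point is that the floor term, not the counting term, realizes the minimum: using that $\Delta H^\star$ is strictly decreasing one gets $B_i\ge (\alpha-2i+1)_+$, while $A_i\le (\alpha-2i+3)/2$, and the inequality $(\alpha-2i+3)/2\le\alpha-2i+1$ holds exactly when $i\le(\alpha-1)/2$. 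For $i\ge(\alpha+3)/2$ one has $A_i=0$, so $\min\{A_i,B_i\}=A_i$ there as well; only the $O(1)$ indices with $i$ near $\alpha/2$ can fail $A_i\le B_i$, and each such index contributes $O(1)$. Hence $s(t)=\sum_{i=1}^{\alpha-1}A_i+O(1)$.

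It remains to estimate $\sum_{i=1}^{\alpha-1}A_i$. Each summand $A_i$ differs from $(\alpha-2i+2)/2=\alpha/2-i+1$ by $O(1)$, and $A_i=0$ once $i>\alpha/2+1$, so
\[ \sum_{i=1}^{\alpha-1}A_i=\sum_{i=1}^{\lfloor\alpha/2\rfloor}\left(\frac{\alpha}{2}-i+1\right)+O(\alpha)=\frac{\alpha^2}{8}+O(\alpha). \]
Combining the two steps, $s(t)=\alpha^2/8+O(\alpha)$, and since $\alpha^2=6t+O(\sqrt t)$ and $\alpha=O(\sqrt t)$, this becomes $s(t)=\frac{3}{4}\,t+O(\sqrt t)$, whence $\lim_{t\to\infty}s(t)/t=3/4$.

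I expect the only genuinely delicate point to be the bookkeeping in the middle step: verifying the inequality $A_i\le B_i$ rather than merely asserting it asymptotically, and confirming that the boundary indices near $i=\alpha/2$—where the indicator $[\,i\le r\,]$, the floor rounding, and the truncation $(\,\cdot\,)_+$ all interact—contribute only a bounded error. None of these perturbations affects the leading coefficient $6\cdot\frac{1}{8}=\frac{3}{4}$, so the limit itself is robust; the care is entirely in showing that the accumulated error is $o(t)$.
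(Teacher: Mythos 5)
Your proof is correct, but it takes a genuinely different route from the paper's. The paper never touches the summation formula of Theorem \ref{lowerbounds}: writing $3t=\binom{b+2}{2}+\epsilon$ with $0\leq\epsilon\leq b+1$, it observes (for $b$ odd; the even case is declared analogous) that Construction \ref{algorithm} applied to $\Delta H=(1,2,\ldots,b+1,\epsilon)$ produces the same number of double points as when applied to the shorter sequence $\Delta H_1=(1,2,\ldots,\frac{b+1}{2},\frac{b+3}{2},\ldots,\frac{b+3}{2})$, which is exactly the star-configuration Hilbert function studied in Section \ref{specialconfigsection}; there the output is known exactly, namely $s(b)=\frac{1}{8}(b+1)(b+3)$ double points, and the limit follows by substituting $t=\frac{1}{3}\bigl(\binom{b+2}{2}+\epsilon\bigr)$. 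You instead evaluate the general formula $\sum_{i}\min\{A_i,B_i\}$ of Theorem \ref{lowerbounds} directly: your conjugate formula $h_i^\star=\alpha-i+1+[\,i\le r\,]$ is correct (and properly sanity-checked against Example \ref{illustrateex}, where both methods give $3$ double points for $t=4$, consistent with the paper's exact count $\frac{1}{8}\cdot 4\cdot 6=3$), your argument that the floor term realizes the minimum for $i\le(\alpha-1)/2$ and trivially for $i\ge(\alpha+3)/2$, leaving only $O(1)$ boundary indices each contributing $O(1)$, is sound, and the conclusion $s(t)=\frac{3}{4}t+O(\sqrt t)$ gives the limit. As for what each approach buys: the paper's reduction is shorter and yields an exact closed form for $s$, but it leans on the equivalence with $\Delta H_1$, which is only asserted to be ``easy to see,'' and it splits into parity cases; your computation is uniform in the parity of $\alpha$ and in the remainder $r$ (both absorbed into bounded errors), is self-contained once Theorem \ref{lowerbounds} is granted, and is quantitatively stronger, producing an explicit $O(\sqrt t)$ rate of convergence rather than the bare limit.
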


\begin{proof}
For each positive integer $t$, we choose $b$ and $0\leq\epsilon\leq b+1$ such that $3t={b+2 \choose 2}+\epsilon$; note that $b$ is uniquely determined by $t$ and viceversa.
We consider the case $b$ odd, and a similar argument applies in the case $b$ even.
With this notation we have that, for $t\geq 6$,
\[
\Delta H=(1,2,\ldots,b+1,\epsilon).
\]
Moreover, it is easy to see that, applying our algorithm to $\Delta H$ produces the same result when applying our algorithm to the length $b+1$ sequence
\[
\Delta H_1=(1,2,\ldots,\frac{b+1}{2},\underbrace{\frac{b+3}{2},\ldots,\frac{b+3}{2}}_{\frac{b+1}{2}}).
\]
As shown is Section \ref{specialconfigsection}, we obtain a set of
\[
s(b)=\frac{1}{8}(b+1)(b+3)
\]
double points.
By a change of variables and using the bound $\epsilon\leq b+1$ we get
\[
\lim_{t\rightarrow+\infty}\frac{s(t)}{t}=
\lim_{d\rightarrow+\infty} \frac{s(b)}{\frac{1}{3}{b+2\choose 2}+\frac{\epsilon}{3}}=\frac{3}{4},
\]
and this complete the proof.
\end{proof}


\end{document}